\renewcommand{\figurename}
\title{\Large Maximum odd induced subgraph of a graph concerning its chromatic number
\thanks{The work was supported by NSFC (No. 12061073)}}
\author{ {Tao Wang, Baoyindureng Wu \footnote{Corresponding author.
Email: baoywu@163.com (B. Wu) }}\\
\small  College of Mathematics and System Sciences, Xinjiang
University \\ \small  Urumqi, Xinjiang 830046, P.R.China \\}
\date{}
\newtheorem{theorem}{Theorem}[section]
\newtheorem{lemma}[theorem]{Lemma}
\newtheorem{corollary}[theorem]{Corollary}
\newtheorem{conjecture}[theorem]{Conjecture}
\begin{document}
\maketitle {\small \noindent{\bfseries Abstract}
Let $f_{o}(G)$ be the maximum order of an odd induced subgraph of $G$. In 1992, Scott proposed a conjecture that $f_{o}(G)\geq \frac {n} {\chi(G)}$ for a graph $G$ of order $n$ without isolated vertices, where $\chi(G)$ is the chromatic number of $G$. In this paper, we show that the conjecture is not true for bipartite graphs, but is true for all line graphs.
In addition, we also disprove a conjecture of Berman, Wang and Wargo in 1997, which states that $f_{o}(G)\geq 2\lfloor\frac {n} {4}\rfloor$ for a connected graph $G$ of order $n$. Scott's conjecture is open for graphs with chromatic number at least 3.

\noindent{\bfseries Keywords:} Chromatic number; Induced subgraphs; Odd subgraphs\\
\noindent{\bfseries Mathematics Subject Classification:} 05C07; 05C15

\section {\large Introduction}
All graphs considered here are simple and finite. For graph-theoretic notation not explained in this paper, we refer to \cite{Bondy}. Let $G=V(G), E(G))$ be a graph. The order and size of $G$ are often denoted by $v(G)$ and $e(G)$. We use $G-S$ to denote the graph obtained from $G$ by removing vertices of $S$ and all edges that are incident to a vertex of $S$. The subgraph $G-(V(G)\setminus S)$ is said to be the induced subgraph of $G$ induced by $S$, and
is denoted by $G[S]$. We use $e_G(A, B)$ to denote the number of edges with one end in $A$ and the other end in $B$ for $A, B\subseteq V(G)$. 
The distance of two vertices $u$ and $v$, denoted by $d_G(u,v)$, is the length of shortest path joining $u$ and $v$ in $G$. The diameter of $G$, denoted by $diam(G)$, is $\max\{d_G(u,v): u, v\in V(G)\}$. The symbols $P_n, C_n, K_n$ represent the path, cycle, complete graph  of order $n$, respectively. A bipartite graph with bipartition $(X, Y)$ is denoted by $G[X,Y]$. In particular, $K_{m,n}$ denotes the complete bipartite graph with $m$ and $n$ vertices in its two parts. For a bipartite graph $G=G[X,Y]$, the {\it bipartite complement} $\overline{G_B}$ of $G$ is the simple bipartite graph with bipartition $(X,Y)$ in which $x$ and $y$ are adjacent if and only if they are not adjacent in $G$.

Gallai (see \cite{Lovasz1979}, Section 5, Problem 17) proved that the vertices of any graph can be partitioned into two sets, each of which induces a subgraph with all degrees even; the vertices also can be partitioned into two sets so that one set induces a subgraph with all degrees even and the other induces a subgraph with all degrees odd. As an immediate consequence of this, we see that every graph of order $n$ contains an induced subgraph of order at least $\lceil\frac n 2\rceil$ with all degrees even.

With motivation from this fact, we are interested in the problem that how large can be the order of an induced subgraph with all degrees odd in a given graph. For a graph $G$, $f_{o}(G)$ is the maximum order of an odd induced subgraph of a graph $G$. Clearly, $f_o(G)$ is even for any graph $G$ by the hand-shaking lemma. More results related to $f_o(G)$ can be found in \cite{Belmonte2021, Berman1997, Caro1994, Gutin2016, Radcliff1995, Scott1992, Scott2001}.
The following long-standing conjecture was first appeared in \cite{Caro19942}.

\begin{conjecture} [\cite{Caro19942}] There exists a positive constant $c$ such that every $n$-vertex graph without isolated vertices contains an odd induced subgraph with at least $cn$ vertices.
\end{conjecture}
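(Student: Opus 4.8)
The plan is to work over $\mathbb{F}_2$ and to combine a random sampling step with a parity-correction step. Identify a vertex set $S$ with its indicator vector $x\in\mathbb{F}_2^{V(G)}$ and let $A$ be the adjacency matrix of $G$ over $\mathbb{F}_2$. Then $G[S]$ has all degrees odd exactly when $(Ax)_v=1$ for every $v$ in the support of $x$. In particular, any nonzero vector of $\ker(A+I)$ gives an all-odd induced subgraph on its support; but this kernel may be trivial and its vectors may have tiny support, so pure linear algebra cannot deliver a linear bound by itself. The constraint ``$(Ax)_v=1$ on $\operatorname{supp}(x)$'' is genuinely nonlinear, and the whole difficulty lies in meeting it on a linear-sized support.

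First I would sample: form $S_0$ by including each vertex independently with probability $p$. For a vertex $v$ of degree $d\ge 1$, conditioned on $v\in S_0$, its degree in $G[S_0]$ is $\operatorname{Bin}(d,p)$, which is odd with probability $\tfrac12\bigl(1-(1-2p)^d\bigr)$. Taking $p=\tfrac12$ makes this exactly $\tfrac12$ for every non-isolated $v$, so in expectation $n/4$ vertices of $S_0$ are \emph{good} (odd) and $n/4$ are \emph{bad} (even), and standard concentration makes this typical. Thus sampling already yields a linear-sized set that is ``mostly odd'' --- but with a bad set of the \emph{same} linear order, which must then be eliminated.

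The correction step must turn $x_0=\mathbf 1_{S_0}$ into a genuine solution while keeping a linear-sized support. Deleting a bad vertex removes it but toggles the parities of its surviving neighbours, so naive greedy deletion cascades: fixing one vertex can spoil several good ones. The tempting shortcut --- splitting $V(G)$ into dyadic degree classes $V_i=\{v:2^i\le\deg_G(v)<2^{i+1}\}$ and working inside the densest class so that toggles have bounded range --- only retains an $n/\log n$ fraction, which is exactly the barrier behind the earlier $\Omega(\sqrt n)$ and $\Omega(n/\log n)$ estimates. To reach a linear bound one cannot localize to a single scale; instead I would phrase the correction as solving, over $\mathbb{F}_2$, for a sparse vector $z$ with $x=x_0+z$ satisfying the parity constraint, and guarantee a solution of small support by a rank/dimension count against the large pool of good vertices.

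The hard part will be controlling the support of this correction so that it consumes only a \emph{small} constant fraction of $S_0$ rather than all of it, simultaneously across all degree scales. Concretely, one must balance two opposing estimates --- a choice of sampling that leaves few enough bad vertices to repair, against a repair whose footprint is provably sparse --- and prove they are compatible without the $\log n$ loss incurred by degree bucketing. Making these quantitative bounds fit together is the delicate core of the argument, and is precisely what elevates the result from the classical sublinear bounds to the conjectured linear one.
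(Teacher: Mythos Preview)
The paper does not prove this statement. Conjecture~1.1 is merely stated, and the paper then records (citing Ferber--Krivelevich \cite{Ferber2021}) that it has been resolved with $c=10^{-4}$; no argument is given in the paper itself. So there is no ``paper's own proof'' to compare against here.

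As a standalone attempt, your proposal is not a proof but an outline with an explicitly unfilled gap. You correctly observe that sampling with $p=\tfrac12$ yields, in expectation, $n/4$ vertices of $S_0$ with odd induced degree and $n/4$ with even, and you correctly flag that the repair step is where all the content lies. But you then stop precisely at the hard part: you say you ``would phrase the correction as solving, over $\mathbb{F}_2$, for a sparse vector $z$'' and ``guarantee a solution of small support by a rank/dimension count,'' yet no such count is supplied, and a bare rank argument does not produce one. After sampling, the bad set has size $\Theta(n)$, the same order as the good set, so any repair that spends even a bounded number of good vertices per bad vertex erases the gain; you acknowledge this (``fixing one vertex can spoil several good ones'') but do not say how to beat it. Your final paragraph is a description of the difficulty, not a resolution of it.

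In short: the framework you sketch --- sample, then correct parities over $\mathbb{F}_2$ --- is in the spirit of the Ferber--Krivelevich proof, but their argument contains a substantive structural ingredient (an iterated partition/peeling that keeps the correction cost a small constant fraction) which your write-up neither states nor proves. Without that ingredient, what you have written does not get past the $\Omega(n/\log n)$ barrier you yourself mention.
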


Moreover, much works are devoted to confirm the validity of the above conjecture for some special families of graphs, such as subcubic graphs \cite{Berman}, graphs with treewidth at most 2 \cite{Hou2018}, trees \cite{Radcliff1995} and planar graphs \cite{Rao2022}.
Very recently, Ferber and Krivelevich \cite{Ferber2021} proved that Conjecture 1.1 is true by taking $c=10^{-4}$.

\section{\large Counterexamples on bipartite graphs}
Let $\chi(G)$ be the chromatic number of a graph $G$. In 1992, Scott \cite{Scott1992} showed that, $f_{o}(G)\geq \frac{n}{2\chi(G)}$ for any graph $G$ of order $n$ without isolated vertices, and proposed the following conjecture.

\begin{conjecture} [Scott \cite{Scott1992}] For any graph $G$ of order $n$ without isolated vertices, $$f_{o}(G)\geq \frac{n}{\chi(G)}.$$
\end{conjecture}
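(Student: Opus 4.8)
The heading of this section signals that the aim is not to prove Conjecture 2.2 but to refute it in the bipartite case. For a bipartite graph $G$ with at least one edge we have $\chi(G)=2$, so the claimed bound reads $f_o(G)\ge n/2$; the plan is therefore to exhibit bipartite graphs without isolated vertices for which $f_o(G)<n/2$. The first thing I would record is that the \emph{obvious} dense candidates do not work. For $K_{m,m}$ one extracts an odd $K_{a,b}$ with $a,b$ odd of order $n-O(1)$; any odd‑regular bipartite graph is itself odd and gives $f_o=n$; and incidence graphs fail because the maximum cut of any graph already contains at least half of its edges, again forcing $f_o\ge n/2$. The moral is that a counterexample must be built around a genuine \emph{parity obstruction} preventing large vertex sets from having all degrees odd at once, rather than around mere density.

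The plan is to realise such an obstruction algebraically, so that the odd-degree condition linearises over $\mathbb{F}_2$. Put $X=Y=\mathbb{F}_2^{\,k}\setminus\{0\}$ and join $x\in X$ to $y\in Y$ precisely when $\langle x,y\rangle=1$; deleting the zero vector removes isolated vertices, and the graph is clearly bipartite with an edge, so $\chi=2$. The key computation is that for $A\subseteq X$ and $B\subseteq Y$ the degree of $x\in A$ inside $G[A\cup B]$ equals $|\{y\in B:\langle x,y\rangle=1\}|\equiv\langle x,s_B\rangle \pmod 2$, where $s_B=\sum_{y\in B}y$, and symmetrically for $y\in B$ with $s_A=\sum_{x\in A}x$. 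Hence $A\cup B$ induces an odd subgraph if and only if $A$ is contained in the affine hyperplane $\{x:\langle x,s_B\rangle=1\}$ and $B$ in $\{y:\langle y,s_A\rangle=1\}$.

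Each such affine hyperplane has exactly $2^{k-1}$ points, so $|A|,|B|\le 2^{k-1}$ and $f_o(G)\le 2^k$ is immediate; since $n=2^{k+1}-2$, this only yields $f_o\le n/2+1$, which is not yet a refutation. The step I expect to be the real obstacle is upgrading this to a \emph{strict} bound, since the violation is only of lower order and the whole counterexample lives in that small gap. Here the parity obstruction finally bites: for $k\ge 3$ the sum of all points of a full affine hyperplane vanishes, so if $A$ were an entire hyperplane then $s_A=\sum_{x\in A}x=0$, forcing $\{y:\langle y,s_A\rangle=1\}=\emptyset$ and hence $B=\emptyset$. Thus $A$ and $B$ cannot both be full hyperplanes, both have size at most $2^{k-1}-1$, and therefore $f_o(G)\le 2^k-2=n/2-1<n/2$.

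Finally I would exhibit a matching odd subgraph to pin down the value and make the refutation unconditional: for a pair $x^*,y^*$ with $\langle x^*,y^*\rangle=1$, the sets $A=\{x:\langle x,y^*\rangle=1\}\setminus\{x^*\}$ and $B=\{y:\langle y,x^*\rangle=1\}\setminus\{y^*\}$ satisfy $s_A=x^*$, $s_B=y^*$ (using $k\ge 3$) and so induce an odd subgraph of order $2^k-2$. The delicate bookkeeping throughout is the simultaneous self-consistency of $s_A=\sum_{x\in A}x$ and $s_B=\sum_{y\in B}y$ with the two hyperplane memberships; it is exactly this coupling between the sides that the vanishing‑sum fact resolves. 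A connected variant of this construction, together with the control it gives over $f_o$, is also the natural candidate for refuting the Berman--Wang--Wargo bound $f_o(G)\ge 2\lfloor n/4\rfloor$.
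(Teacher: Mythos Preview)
Your disproof is correct and takes a genuinely different route from the paper's. The paper fixes a single explicit $14$-vertex graph, the bipartite complement $G$ of the Heawood graph, and establishes $f_o(G)=6<7=n/2$ by an exhaustive case analysis on $|S\cap X|$ running through six cases and many subcases. Your construction is in fact the \emph{same} graph when $k=3$: identifying points and lines of the Fano plane with the nonzero vectors of $\mathbb{F}_2^{\,3}$ and its dual, a point lies on a line exactly when the pairing vanishes, so the bipartite complement of the Heawood graph is precisely your $\langle x,y\rangle=1$ graph. What your linear-algebraic framing buys is that the entire case analysis collapses to two observations: the parity of every degree in $G[A\cup B]$ is $\langle x,s_B\rangle$ (resp.\ $\langle y,s_A\rangle$), and the sum over a full affine hyperplane of $\mathbb{F}_2^{\,k}$ vanishes once $k\ge 3$. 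This yields both a two-line proof of $f_o=2^k-2$ and an infinite family of counterexamples for all $k\ge 3$, whereas the paper manufactures further examples only by taking disjoint unions of copies of its $14$-vertex graph. Two small remarks: your motivational dismissal of ``incidence graphs'' via a max-cut inequality is not a valid argument (and is mildly ironic, since your own example is the bipartite complement of one); and your closing suggestion that this family also refutes the Berman--Wang--Wargo bound is off target, since $n=2^{k+1}-2\equiv 2\pmod 4$ gives $2\lfloor n/4\rfloor=(n-2)/2=2^k-2=f_o(G)$ with equality, while the paper refutes that conjecture by an unrelated construction, namely $L(K_n)$ for even $n$.
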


We disprove this for bipartite graphs. 
First recall a few definitions \cite{Bondy}. A hypergraph $(P, L)$ is called a {\it geometric configuration} at most one element of $L$ contains any given pair of elements. An element of $P$ are called a {\it point} and  an element of $L$ are called a {\it line}. 
A {\it finite projective plane} is a geometric configuration in which 

(i) any two points lie exactly one line, 

(ii) any two lines meet in exactly one point,

(iii) there are four points no three of which lie on a line. 

\vspace{2mm} Fano plane is a projective plane with $P=\{1, 2, \ldots, 7\}$ and $L=\{124,235,346,\\457,156, 567,137\}$, as shown in Fig 1.
\begin{center}
\scalebox{0.30}[0.30]{\includegraphics{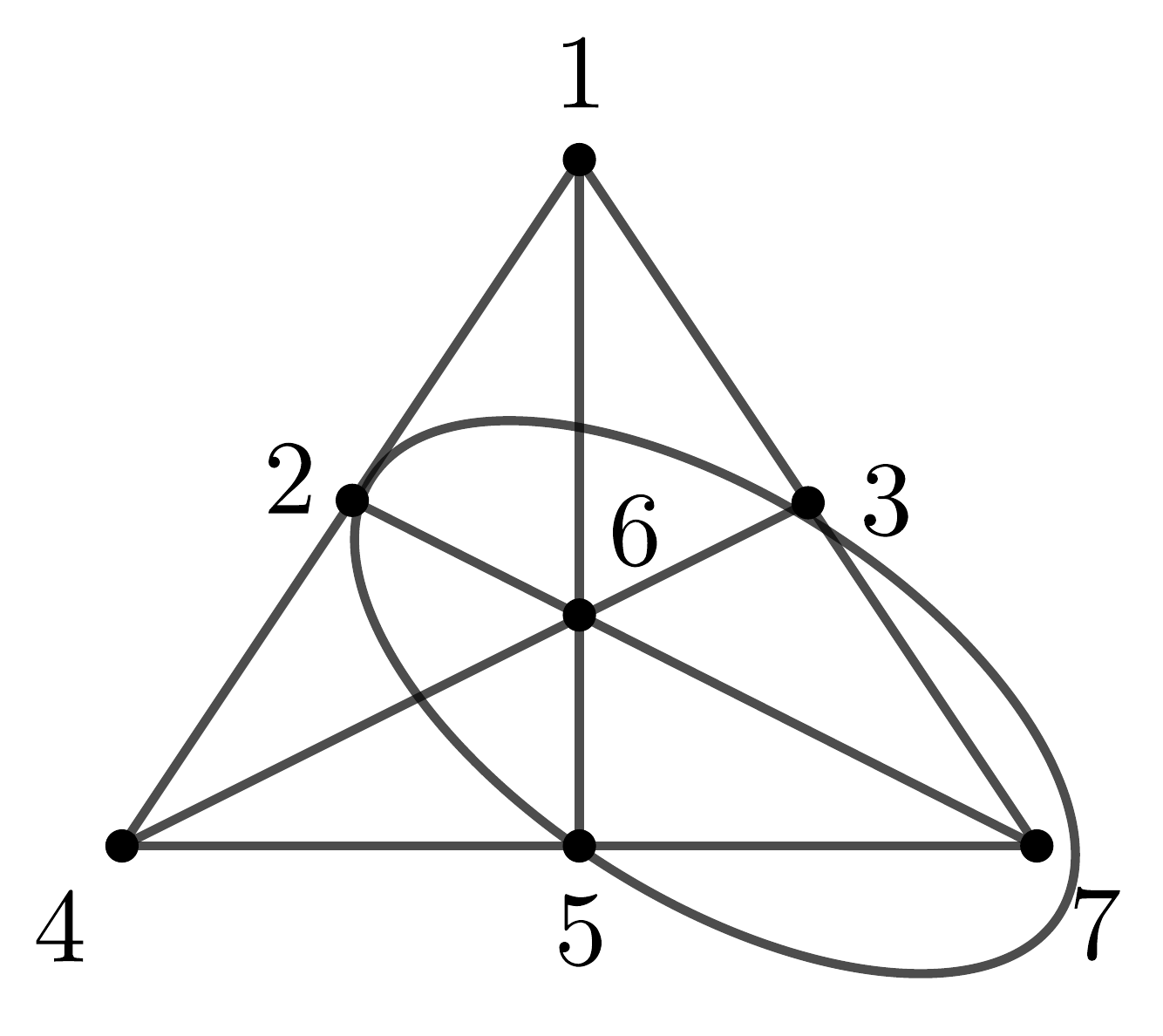}}
\centerline{Fig. 1.~Fano plane}
\end{center}

The incidence graph of Fano plane, known as the Heawood graph, is the bipartite graph in Fig. 2 (not containing those edges colored in red). Let $G$ be the graph obtained from Heawood graph by adding a perfect matching as depicted in red.
Formally, $V(G)=X\cup Y$, where $X=\{1,2,3,4,5,6,7\}, Y=\{1247,1235,2346,3457,1456, 2567,1367\}$. Two vertices $x\in X$ and $y\in Y$ are adjacent if and only if $x\in y$.

Surprisingly, $G$ is isomorphic to the bipartite complement graph of the Heawood graph $H$.
We will see that $G$ is a counterexample to Conjecture 2.1, because of $f_{o}(G)=6<\frac{14}{2}=\frac{v(G)} {\chi(G)}$.

%Obviously, it is a $(BIBD)B[4,2; 7]$. we assign labels to its vertices as vividly shows above the Fig. 1 mainly for the purpose of referring to them.

\begin{center}
\scalebox{0.35}[0.35]{\includegraphics{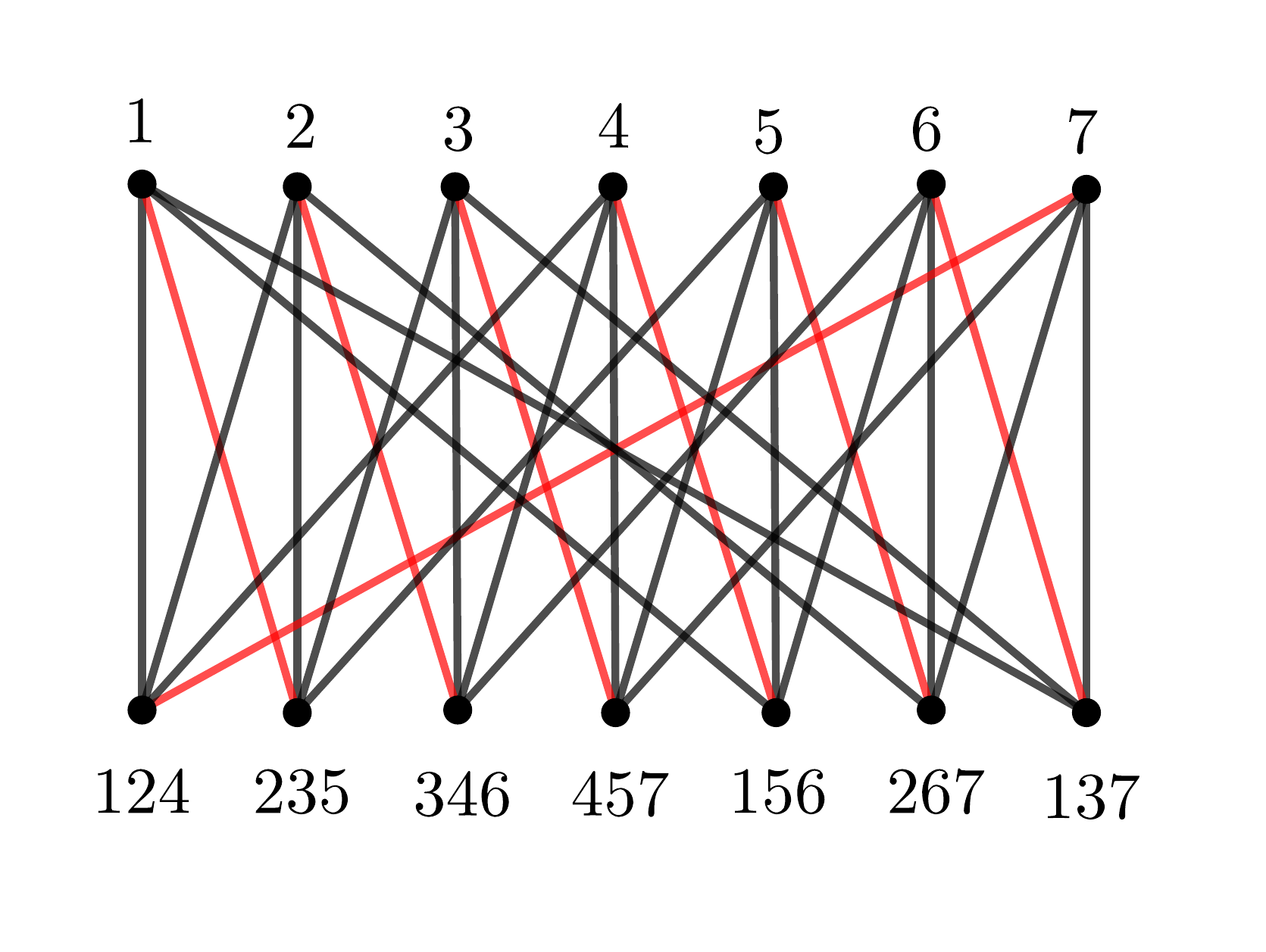}}
\centerline{Fig. 2.~Heawood graph $H$ with an extra perfect matching $M$ and $G=H+M$}
\end{center}

\begin{center}
\scalebox{0.30}[0.30]{\includegraphics{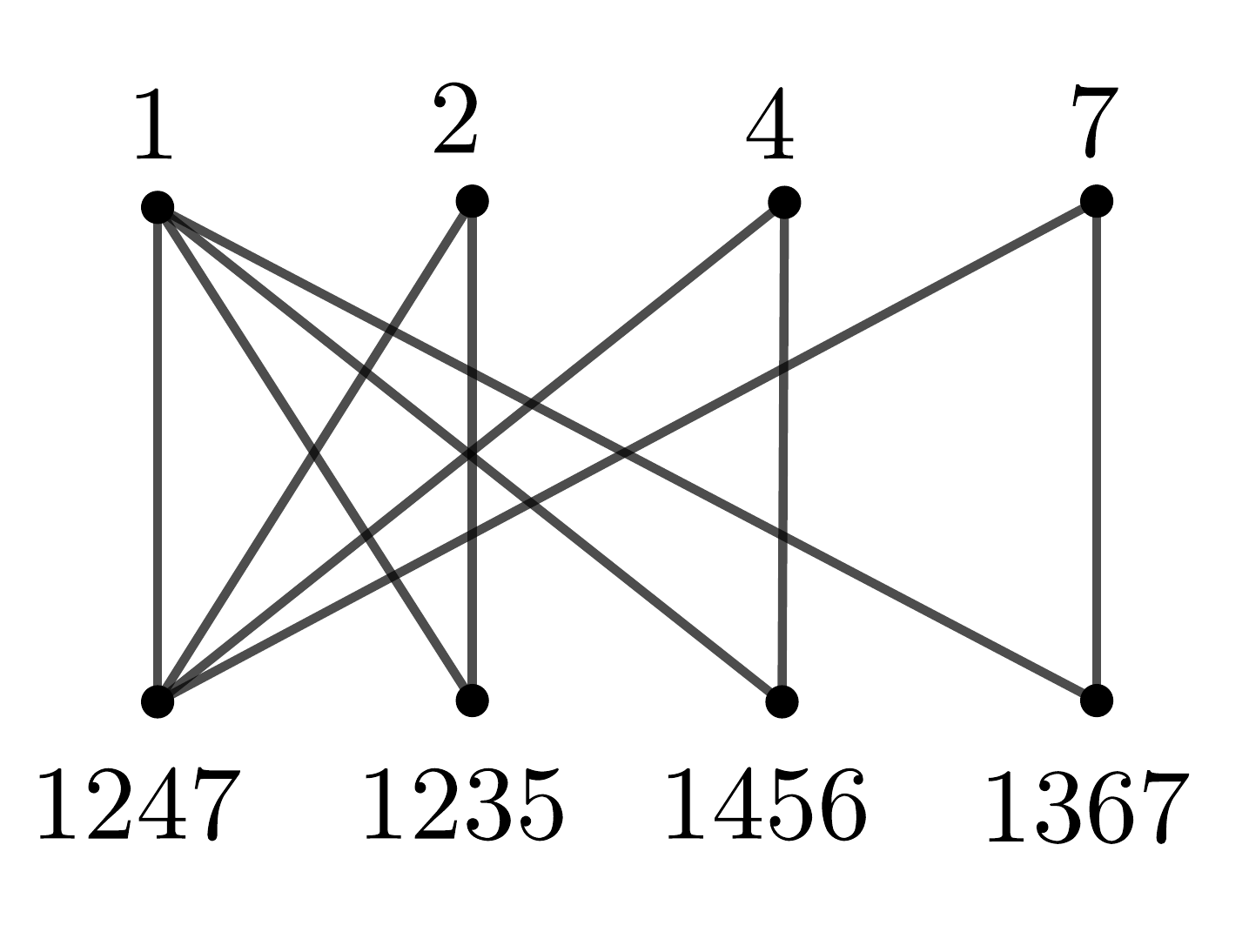}}
\centerline{Fig. 3.~$G[N(x)\cup N(y)]$ for an edge $xy\in E(G)$ }
\end{center}

\begin{lemma}
Let $H$ be the Heawood graph and $G=H+M$ as shown in Fig. 2, where $M$ is the extra perfect matching depicted in red. The following statements hold:

(1) $G$ is a vertex-transitive and edge-transitive 4-regular graph.

(2) $G[N(x)\cup N(y)]$ is isomorphic to the graph shown in Fig. 3 for an edge $xy\in E(G)$. 

(3) $|N_G(x)\cap N_G(x')|=2$ for any two elements $x, x'\in X$; Since $G$ is transitive, $|N_G(y)\cap N_G(y')|=2$ for any two elements $y, y'\in Y$.

(4) $G$ has no subgraph isomorphic to $K_{2,3}$. 
\end{lemma}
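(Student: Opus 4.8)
The plan is to treat part (1) as the engine of the whole lemma: once vertex- and edge-transitivity are in hand, the remaining parts reduce to a single design computation and a single local check. I would first fix the combinatorial model, identifying $X$ with the points and $Y$ with the seven listed $4$-element blocks, with $x\sim y$ iff $x\in y$. The $4$-regularity is then a direct double count: each point lies in exactly $4$ of the blocks and each block has exactly $4$ points, so I would tabulate the seven incidences and observe that every vertex has degree $4$.

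For the transitivity claims I would exploit the identification of $G$ with the bipartite complement of the Heawood graph $H$, the incidence graph of the Fano plane. The collineation group of the Fano plane, $PSL(3,2)\cong PSL(2,7)$ of order $168$, acts on points and on lines preserving incidence, hence also preserving non-incidence; each collineation therefore extends to a bipartition-preserving automorphism of $G$, and this group is already transitive on $X$ and on $Y$. Adjoining a polarity (a duality interchanging points and lines) yields a part-swapping automorphism, and the two together act transitively on all $14$ vertices, giving vertex-transitivity. For edge-transitivity I would note that the edges of $G$ are exactly the anti-flags (non-incident point--line pairs); the stabiliser of a point is isomorphic to $S_4$ and acts as the natural $S_4$ on the four lines missing that point, so it is transitive on the anti-flags through a fixed point, and combined with point-transitivity this gives transitivity on all $28$ edges. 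An equivalent, more graph-theoretic route is to invoke that $H$ is distance-transitive of diameter $3$ and that the edges of $G$ are precisely the vertex pairs at distance $3$ in $H$.

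With part (1) established, parts (3), (4) and (2) follow quickly. For (3), the set $N_G(x)\cap N_G(x')$ is the family of blocks containing both $x$ and $x'$; since the blocks form, up to isomorphism, the complementary $2$-$(7,4,2)$ design of the Fano plane, every pair of points lies in exactly $\lambda'=b-2r+\lambda=7-6+1=2$ blocks, and dually any two blocks meet in exactly $2$ points — I would confirm the value $2$ by a short direct tabulation of pairs, and obtain the $Y$-side either from this tabulation or from the part-swapping automorphism of (1). Part (4) is then immediate: a copy of $K_{2,3}$ in the bipartite graph $G$ would force two vertices on one side with three common neighbours, contradicting the common-neighbour count of exactly $2$ from (3). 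For (2), edge-transitivity lets me fix a single representative edge, say $x=1$ and $y=\{1,2,4,7\}$; computing the induced adjacencies on $N(x)\cup N(y)$ (the eight vertices given by the four blocks through $1$ and the four points of that block) shows that $x$ and $y$ are two adjacent degree-$4$ vertices joined by three internally disjoint paths of length $3$, which I would check matches Fig.\ 3 exactly.

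The main obstacle I anticipate is making the transitivity argument fully rigorous rather than anecdotal: specifically, verifying edge-transitivity, i.e.\ that the relevant group acts transitively on the anti-flags (equivalently, on the distance-$3$ pairs of the Heawood graph). Everything else — the degree count, the pairwise intersection number, the absence of $K_{2,3}$, and the identification of the local graph in Fig.\ 3 — is a bounded finite computation that becomes a clean ``check once'' statement only after transitivity is secured.
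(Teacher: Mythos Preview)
Your proposal is correct and structurally matches the paper's proof: part (1) via the automorphisms of the Heawood graph/Fano plane (the paper simply cites that $H$ is vertex- and edge-transitive and that bipartite complementation preserves the automorphism group), part (3) from the projective-plane incidence axiom (the paper uses ``two points lie on exactly one line'' and a short count rather than your complementary-design parameter $\lambda'=2$), and parts (2) and (4) as immediate consequences of (1) and (3), respectively. Your write-up is considerably more detailed than the paper's terse argument---in particular you explicitly justify the passage from edge-transitivity of $H$ to edge-transitivity of $G$ via distance-transitivity or the anti-flag action of the point stabiliser, which is exactly the step the paper leaves implicit and which you rightly flagged as the main obstacle.
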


\begin{proof}
(1) is immediate from the observation that $G$ is isomorphic to the bipartite complement of $H$ and $H$ is a vertex-transitive and edge-transitive 3-regular graph.

By the definition (i) of a projective plane, $|N_H(x)\cap N_H(x')|=1$ for any two vertices $x, x'\in X$. Since $|X|=7$, there are exactly two vertices of $Y$ adjacent to neither $x$ nor $x'$ in $H$, which are, in turn, adjacent to both $x$ and $x'$ in $G$. This proves (2). 

(3) is a direct consequence of (1), and $(4)$ can be derived from (3).

\end{proof}

\begin{theorem}
If $G$ is the bipartite complement of the Heawood graph, then $f_{o}(G)=6<\frac{14}{2}$.
\end{theorem}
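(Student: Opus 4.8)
The plan is to prove the two inequalities separately. Since $\frac{14}{2}=7$ is odd and $f_o(G)$ is always even by the hand-shaking lemma, the strict inequality $f_o(G)<7$ is equivalent to $f_o(G)\le 6$, so it suffices to establish $f_o(G)\ge 6$ and to rule out every odd induced subgraph on $8,10,12$ or $14$ vertices. For the lower bound I would read off, from the explicit adjacency of $G$ (two vertices $x\in X$, $y\in Y$ adjacent iff $x\in y$), three pairwise ``independent'' edges spanning an induced $3K_2$, in which every vertex has degree $1$; this gives $f_o(G)\ge 6$ at once.

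For the upper bound, write $S=A\cup B$ with $A=S\cap X$, $B=S\cap Y$, put $a=|A|$, $b=|B|$, and let $T=V(G)\setminus S$. As $G$ is $4$-regular and bipartite, each $v\in S$ has all four neighbours on the opposite side, so oddness forces $\deg_{G[S]}(v)\in\{1,3\}$, hence exactly $1$ or $3$ neighbours in $T$; in particular at least one. Letting $S_3\subseteq S$ be the vertices of $G[S]$-degree $1$ and counting the edges across the cut (each vertex of $T$ has degree $4$) yields the identity $|S_3|+e(G[T])=28-\tfrac{5}{2}|S|$; non-negativity of the left side kills $|S|=12,14$ immediately. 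This reduces the problem to the finitely many partition types with $|S|\in\{8,10\}$.

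These are dispatched using the two facts from the Lemma: $G$ is $4$-regular and any two vertices on the same side have exactly two common neighbours. First, $a=7$ (and $b=7$) is impossible, since then each $y\in B$ would have all four neighbours in $A$, an even degree. When one ``outside'' set is small, oddness fixes the crossing degrees and the two-common-neighbour bound overshoots: for $(a,b)=(6,4)$, oddness forces $B=N(x_0)$ for the unique $x_0\in X\setminus A$, so every $x\in A$ has $|N(x)\cap B|=|N(x)\cap N(x_0)|=2$, even -- a contradiction; for $(6,2)$ the two vertices of $B$ have no common neighbour in $A$ yet both are forced adjacent to the single vertex of $X\setminus A$, contradicting that they have two common neighbours; for $(5,3)$ the count $\sum_{\{y,y'\}\subseteq B}|N(y)\cap N(y')|=2\binom{b}{2}=6$ is already attained inside $A$, leaving nothing for $X\setminus A$ though $e(X\setminus A,B)=3$; and $(5,5)$ falls because no vertex of $B$ is a common neighbour of the two vertices of $X\setminus A$, forcing both their common neighbours into $Y\setminus B$ and hence one edge too many.

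The genuine obstacle is the balanced case $(a,b)=(4,4)$ for $|S|=8$, where every soft count is consistent. Here I would let $p$ be the number of vertices of $A$ with three neighbours in $B$ (equal to the analogous count on $B$), so $e(A,B)=4+2p$. The identity $\sum_{y\in Y}\binom{|N(y)\cap A|}{2}=2\binom{a}{2}=12$ pins down both $\sum_{y\in Y\setminus B}|N(y)\cap A|=12-2p$ and $\sum_{y\in Y\setminus B}\binom{|N(y)\cap A|}{2}=12-3p$, whence the three degrees $d_y$ ($y\in Y\setminus B$) satisfy $\sum d_y^2=36-8p$; the power-mean inequality $3\sum d_y^2\ge(\sum d_y)^2$ then gives $4(p-3)^2\le 0$, forcing $p=3$. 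Finally $p=3$ is excluded structurally: the three degree-$3$ vertices $a_1,a_2,a_3$ of $A$ have pairwise exactly two common neighbours, all inside $B$, so their missed neighbours $\bar b_1,\bar b_2,\bar b_3$ in $B$ are distinct; the remaining vertex of $B$ is then adjacent to all three, so the degree-$1$ vertex of $B$ must be some $\bar b_k$ and is therefore adjacent to the other two $a_i$, contradicting its degree. I expect this $p=3$ sub-case to be the crux, as it is the only point where the local incidence structure of $G$, rather than merely its degree and common-neighbour parameters, is required.
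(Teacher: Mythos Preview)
Your argument is correct, and it follows a genuinely different route from the paper's.  The paper fixes the side with fewer vertices of $S$ and runs an explicit case analysis on $|S\cap X|\in\{1,\dots,7\}$; the hardest case $|S\cap X|=4$ is then split into $|S\cap Y|\in\{4,6\}$ and the former further into five subcases according to $n_3\in\{0,1,2,3,4\}$, several of which are dispatched by naming specific vertices of $G$ and checking adjacencies by hand.  Your proof, by contrast, is almost coordinate-free: the cut identity $|S_3|+e(G[T])=28-\tfrac{5}{2}|S|$ removes $|S|\ge 12$ in one line; the unbalanced partitions $(7,\ast)$, $(6,4)$, $(6,2)$, $(5,3)$, $(5,5)$ fall to short double counts that use only $4$-regularity and the constant-codegree property $|N(u)\cap N(u')|=2$; and in the balanced case $(4,4)$ you replace the paper's five-way split by a single second-moment computation that forces $p=3$, followed by a short structural contradiction.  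The paper's approach is more elementary---no inequality beyond inclusion--exclusion is needed---while yours isolates exactly which design parameters of $G$ are doing the work, so it would transfer to other bipartite complements of incidence graphs of symmetric $2$-designs.  One small point of presentation: in the $(5,3)$ case the assertion ``already attained inside $A$'' relies on first observing that each $y\in B$ has exactly one neighbour in $X\setminus A$ (whence $e(A,B)=9$ and exactly two vertices of $A$ have three neighbours in $B$); you use this implicitly but it deserves a sentence.
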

\begin{proof}
For convenience, let $V(G)=(X,Y)$, where $X=\{1,2,3,4,5,6,7\}$, $Y=\{1247,1235, 2346,3457,1456,2567,1367\}$. Since $\{2, 4, 7, 1235, 1456, 1367\}$ induces an odd induced subgraph with all degrees 1, $f_o(G)\geq 6$. It remains to show that $f_o(G)\leq 6$.
Let $F$ be an odd induced subgraph of $G$ induced by $S$.
Since $G$ is bipartite, so is $F$. Since $G$ is vertex-transitive, we may assume that $|S\cap X|\leq |S\cap Y|$, without loss of generality. We shall show that $|S|\leq 6$ in terms of $|S\cap X|$. 

\vspace{2mm}\noindent{\bf Case 1.} $|S\cap X|=1$

\vspace{2mm} Let $S\cap X=\{x\}$. Clearly, $|S\cap Y|\leq d_F(x)$. Since $d_F(x)$ is odd and $d_F(x)\leq d_G(x)=4$, $d_F(x)\leq 3$. Hence $|S|\leq 4$.

\vspace{2mm}\noindent{\bf Case 2.} $|S\cap X|=2$ 

\vspace{2mm} Let $S=\{x, x'\}$.
Since $|N_G(x)\cap N_G(x')|=2$ for any two elements $x, x'\in X$, it follows that $|S\cap Y|\leq 4$, and thus $|S|\leq 6$.

\vspace{2mm} \noindent{\bf Case 3.} $|S\cap X|=3$ 

\vspace{2mm} Let $S\cap X=\{a,b,c\}$. 

\vspace{2mm} \noindent{\bf Subcase 3.1.} There exists a vertex $y\in Y$ adjacent to all elements of $S\cap X$ in $G$. 

\vspace{2mm} Let $y_{ab}$ be the vertex of $Y$ adjacent to both $a$ and $b$ in $G$ other than $y$. Furthermore, $y_{ac}$ and $y_{bc}$ are the vertices of $Y$ are defined similarly. Each of $y_{ab}$, $y_{ac}$ and $y_{bc}$ has exactly two neighbors in $S\cap X$ and hence, cannot belong to $S$. Thus $|S\cap Y|\leq 4$. Combining this with the fact that $|S\cap Y|$ is odd, $|S\cap Y|\leq 3$, and hence $|S|\leq 6$.

\vspace{2mm}\noindent{\bf Subcase 3.2.} There exists no vertex $y\in Y$ adjacent to all elements of $S\cap X$ in $G$. 

\vspace{2mm} It means that if a vertex $y$ is adjacent to two vertices of $S\cap X$ in $G$, then $y\notin S$. However, there are six such vertices in all by Lemma 2.2 (3), implying $|S\cap Y|\leq 1$. This contradicts $|S\cap Y|\geq 3$.  

\vspace{2mm} In the remaining proof, we show that $|S\cap X|\geq 4$ is impossible. 

\vspace{2mm}\noindent{\bf Case 4.} $|S\cap X|=4$ 

\vspace{2mm} By $|S\cap X|\leq |S\cap Y|$ and $|S|$ is even, we have $|S\cap Y|\in\{4, 6\}$.

\vspace{2mm}\noindent{\bf Subcase 4.1.} $|S\cap Y|=6$

\vspace{2mm} Since $G$ is vertex-transitive, we may assume that $Y\setminus S=\{y\}$, where $y=1367$, without loss of generality. Note that each element $x$ of $\{2,4,5\}$ cannot belong to $S$, because of $|N_G(x)\cap S|=4$. Moreover, since $|X|=7$, we have $X\cap S=X\setminus \{2, 4, 5\}=\{1, 3, 6, 7\}$. However, 1247 has exactly two neighbors 1 and 7 in $X\cap S$. This contradicts that $d_F(1247)$ is odd. 

\vspace{2mm}\noindent{\bf Subcase 4.2.} $|S\cap Y|=4$

\vspace{2mm} Let $S\cap X=\{a,b,c,d\}$. Observe that $F$ is an odd graph of order 8 with $d_F(v)\in\{1, 3\}$. Since $F$ is bipartite, $|\{x: x\in S\cap X \ \text{with}\ d_F(x)=3\}|=|\{y: y\in S\cap Y\ \text{with}\ d_F(y)=3\}|$ and $|\{x: x\in S\cap X \ \text{with}\  d_F(x)=1\}|=|\{y: y\in S\cap X \ \text{with}\  d_F(y)=1\}|$. Let $n_i=|\{x: x\in S\cap X \ \text{with}\ d_F(x)=i\}|$ for $i\in\{1, 3\}$. 

%\vspace{2mm}\noindent{\bf Case 4.1} assume that there exists a vertex $y\in Y$ adjacent to all vertices in $S\cap X$. By Lemma 
%2.2 (2), let $y_{ij}$ be the vertex adjacent to both $i$ and $j$ other than $y$ in $G$ for any $i, j\in\{a, b, c, d\}$. Furthermore, these vertices are distinct. Thus, $Y=\{y\}\cup\{y_{ij}:\ i,j\in\{a,b,c,d\}\}$. Since $|N_G(y)\cap S|=4$ and $|N_G(y_{ij})\cap S|=2$ for each $i,j \in\{a,b,c,d\}$, $S\cap Y=\emptyset$, contradicting $|S\cap Y|\geq 4$.  
%
%Secondly, assume that there exists a vertex $y_{abc}\in Y$ adjacent to three vertices in $S\cap X$, say $a, b, c$. By Lemma 2.2 (3),

\vspace{2mm}\noindent{\bf Subcase 4.2.1.} $n_3=4$

\vspace{2mm} It implies that $e(F)=4\times 3=12$ and thus $e_G(S, V(G)\setminus S)=8\times 4-2\times 12=8$. Therefore, $G-S$ has 6 vertices and $e(G-S)=\frac {6\times 4-8} 2=8$, Moreover, since $G-S$ is bipartite graph with 3 vertices each part, $G-S\cong K_{3,3}-e$. This contradicts the assertion (4) of Lemma 2.2.

\vspace{2mm}\noindent{\bf Subcase 4.2.2.} $n_3=3$

\vspace{2mm}Let $x_1$ and $y_1$ are the two vertices of $F$ with degree 1. Thus, $F-x_1-y_1\cong K_{3,3}-e$. On the other hand, by Lemma 2.2 (4) $F-x_1-y_1$ is isomorphic to the subgraph $G[N(x)\cup N(y)]$ deleting two vertices 7 and 1367. Clearly, this is impossible.  

\vspace{2mm}\noindent{\bf Subcase 4.2.3.}  $n_3=0$

\vspace{2mm} It follows that $e_G(S\cap X, Y\setminus S)=e_G(S\cap Y, X\setminus S)=4\times 3$. On the other hand, since $\sum_{x\in X\setminus S} d_G(x)=4\times |X\setminus S|=12=4\times|Y\setminus S|=\sum_{y\in Y\setminus S} d_G(y)$. Summing the above two facts, we conclude that $G-S$ is edgeless, i.e.  
$H-S\cong K_{3,3}$. This is impossible, $H$ is 3-regular, $H-S$ is a proper subgraph of $H$.

\vspace{2mm}\noindent{\bf Subcase 4.2.4.} $n_3=1$

\vspace{2mm} The degree sequence of $F$ must be $3, 3, 1, 1, 1, 1, 1, 1$. Since a graph containing a cycle has at least three vertices of degree greater than 1, $F$ is acyclic. 
Let $x\in S\cap X$ and $y\in S\cap Y$ be the two vertices with degree 3 in $F$. 
If $xy \notin E(F)$, then $F\cong 2K_{1,3}$ ($F$ consists of two disjoint claws). Since $G$ is vertex-transitive and edge-transitive, we may assume that $1\in S\cap X$ and $N_F(1)=\{1247, 1235, 1456\}$, without loss of generality. Clearly, each vertex of $X$ is adjacent to at least one element of $\{1247, 1235, 1456\}$ in $G$, and therefore $S\cap X\subseteq \{1\}$, contradicting $|S\cap X|=4$.
If $xy \in E(F)$, then $F\cong G[N(x)\cup N(y)]-x'-y'$  for some vertices $x'\in N_G(y)\setminus \{x\}$ and $y'\in N_G(x)\setminus \{y\}$. However, by Lemma 2.2 (2), $G[N(x)\cup N(y)]-x'-y'$ always contain a cycle. Again, this is a contradiction.

\vspace{2mm}\noindent{\bf Subcase 4.2.5.} $n_3=2$

\vspace{2mm} Since $|S\cap X|=|S\cap Y|=4$ and $n_3=2$, the four vertices of $F$ with degree 3 induces a $C_4$. Since $G$ is vertex-transitive, without loss of generality, we may assume that $V(C_4)=\{1, 2, 1247, 1235\}$. Clearly, these four vertices have degree 3 in $F$. Moreover, each element of $V(F)\setminus V(C_4)$ has 1 in $F$ and $G$ is edge-transitive, we may assume that $\{4, 1367\}\subseteq V(F)$.  
Since $d_F(1)=1$ and $1247\in V(F)$, we have $2346, 3457, 1456\notin V(F)$, and thus $2567\in V(F)$. 
Moreover, since $d_F(1235)=3$, $3\in V(F)$ or $5\in V(F)$. However, neither $3\in V(F)$ nor $5\in V(F)$, because 3 has two neighbor 1235 and $1367$, 5 has two neighbors 1235 and 2567. Thus, $F$ cannot exists. 

\vspace{2mm}\noindent{\bf Case 5.} $|S\cap X|=5$

\vspace{2mm} Since $|S|$ is even, $|S\cap Y|\in\{5, 7\}$.
Let $X\setminus S=\{a, b\}$. Let $y_{ab}^1$ and $y_{ab}^2$ be the two vertices in $Y$ adjacent to $a$ and $b$, and $y_{cdef}$ be the unique element containing neither $a$ nor $b$.
Clearly, $y_{ab}^1$ and $y_{ab}^2$ have degree two and $y_{cdef}$ has degree 4 in $G-a-b$. Thus, $y_{ab}^1\notin S\cap Y$, $y_{ab}^2\notin S\cap Y$ and $y_{cdef}\notin S\cap Y$, implying that $|S\cap Y|\leq 4$. This contradicts the assumption $|S\cap Y|\geq |S\cap X|\geq 5$.

\vspace{2mm}\noindent{\bf Case 6.} $|S\cap X|=6$

\vspace{2mm} But, this is impossible.  
Suppose that $X\setminus S=\{x\}$ and $Y\setminus S=\{y\}$. Since $d_G(x)=4$, there exists a vertex $y'\in Y\setminus \{y\}$ which is not adjacent to $x$ in $G$. Clearly, $d_F(y')=4$ contradicts that $d_F(y')$ is odd.  
\end{proof}

Remark that $G-v$ is also a counterexample to Conjecture 2.1. It might be possible that one may find more counterexamples by using finite projective planes. Scott's conjecture is still open for graphs with chromatic number at least 3.
We may obtain an infinite number of counterexamples to Scott's conjecture by the following way.

\begin{corollary}
If $kG$ is the disjoint union of $k$ copies of $G$ for any positive integer $k$, then $f_o(kG)=kf_o(G)<k\frac {v(G)} 2=\frac {v(kG)} {\chi(kG)}$.
\end{corollary}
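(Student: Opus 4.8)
The plan is to reduce the corollary to three elementary facts about the disjoint union operation, combined with the values $v(G)=14$ and $f_o(G)=6$ established in Theorem 2.3. The displayed chain splits naturally into the identity $f_o(kG)=kf_o(G)$, the strict inequality $kf_o(G)<k\frac{v(G)}{2}$, and the final identity $k\frac{v(G)}{2}=\frac{v(kG)}{\chi(kG)}$, and I would handle these in turn.

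The heart of the matter is the identity $f_o(kG)=kf_o(G)$, which I would prove by a component-wise decomposition. For the lower bound, I would fix a maximum odd induced subgraph $G[T]$ of $G$ with $|T|=f_o(G)$ and place a copy of $T$ inside each of the $k$ disjoint copies of $G$. Since there are no edges between distinct copies, the resulting induced subgraph of $kG$ has every degree odd and order $kf_o(G)$, so $f_o(kG)\geq kf_o(G)$. For the upper bound, I would take an arbitrary odd induced subgraph of $kG$, induced by a set $S$, and write $S=S_1\cup\cdots\cup S_k$ according to the copies. Because distinct copies are pairwise non-adjacent, the degree in $kG[S]$ of a vertex lying in the $i$-th copy equals its degree inside that copy alone; hence each nonempty $S_i$ induces an odd subgraph of $G$, giving $|S_i|\leq f_o(G)$, and summing over $i$ yields $|S|\leq kf_o(G)$.

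The remaining two steps are immediate. The strict inequality follows from Theorem 2.3, which gives $f_o(G)=6<7=\frac{v(G)}{2}$; multiplying through by the positive integer $k$ preserves strictness, so $kf_o(G)<k\frac{v(G)}{2}$. For the last identity, I would use that the disjoint union multiplies the order, $v(kG)=k\,v(G)$, while leaving the chromatic number unchanged, $\chi(kG)=\chi(G)=2$, since the chromatic number of a disjoint union is the maximum of the components' chromatic numbers. Substituting gives $\frac{v(kG)}{\chi(kG)}=\frac{k\,v(G)}{2}=k\frac{v(G)}{2}$, completing the chain.

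I do not anticipate a genuine obstacle, as the statement is essentially a bookkeeping consequence of Theorem 2.3. The one point requiring slight care is the upper-bound half of $f_o(kG)\leq kf_o(G)$: one must observe that oddness is a \emph{component-local} condition, so an optimal odd induced subgraph of $kG$ cannot outperform independent optimization within each copy, and that copies contributing no vertices are harmlessly bounded by $f_o(G)$ as well.
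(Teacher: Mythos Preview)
Your proposal is correct. The paper states this corollary without proof, treating it as immediate from Theorem~2.3 together with the elementary facts that $f_o$, $v$, and $\chi$ behave as expected under disjoint union; your argument supplies precisely these routine details and matches the intended reasoning.
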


\section{\large Line graphs}

For a graph $G$, the {\it line graph} of $G$, denoted by $L(G)$, is the graph with $V(L(G))= E(G)$, in which two vertices are adjacent if and only if they are adjacent as edges of $G$.

A subgraph $H$ of a graph $G$ is called an {\it odd-even} edge induced subgraph if all adjacent vertices in $H$ have degrees with distinct parity. Note that every odd-even edge induced graph is a bipartite graph with a bipartition in which one part consists of vertices of degree odd, while the other part consists of vertices of degree even.
Observe that every odd induced subgraph of the line graph $L(G)$ of a graph $G$ corresponds to an odd-even edge induced subgraph of $G$. We use $\varepsilon(G)$ denote the maximum size of an odd-even edge induced subgraph of $G$.
Clearly, $\varepsilon(G)=f_0(L(G))$ for any graph $G$.
%A tree is called a {\it 2-spider} if it can be obtained from a star by subdividing every edge once.

%We define a family of trees, denoted by $\mathcal{T}_{1,2}$, as follows:
%
%(1) $P_3\in\mathcal{T}_{1,2}$;
%
%(2) If $T'\in \mathcal{T}_{1,2}$, then $T\in \mathcal{T}_{1,2}$ where $T$ is constructed from connecting $aK_1\cup bK_2$ to some vertex of $T'$ such that $a+2b\equiv 0$ (mod 4) for two integers $a,b$.

\begin{theorem}\label{tree}
If $T$ is a tree of order $n$, then $f_o(L(T))\geq f(n)$, where
$$
f(n)=\left \{
\begin{array}{ll}
\lceil\frac n 2\rceil, & \mbox{if $n\equiv 0,3$ $(\hspace{-4mm}\mod 4)$}\\
\lfloor\frac {n-1} 2\rfloor, &\mbox{if $n\equiv 1,2$ $(\hspace{-4mm}\mod 4)$.}
\end{array}
\right.
$$
\end{theorem}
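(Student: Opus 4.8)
The plan is to work throughout with the equivalent quantity $\varepsilon(T)=f_o(L(T))$, the maximum size of an odd-even edge induced subgraph of $T$. Recall that such a subgraph is a subforest $H\subseteq T$ in which every edge joins a vertex of odd $H$-degree to a vertex of even $H$-degree; the two simplest instances are a path $P_3$ (two edges) and, more generally, a star $K_{1,2s}$ with an even number of edges, each of which is odd-even on its own. Two elementary facts drive the argument. First, $\varepsilon$ is monotone under vertex deletion: if $H$ is odd-even in $T-v$ then it is odd-even in $T$ with the same degrees, so $\varepsilon(T)\ge\varepsilon(T-v)$. Second, writing the bound in the closed form $f(n)=2\lceil (n-2)/4\rceil$, one checks directly against the piecewise definition that $f(n)=f(n-1)$ unless $n\equiv 3\pmod 4$, in which case $f(n)=f(n-1)+2$. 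I would prove the theorem by strong induction on $n$, with the trivial base cases $n\le 3$ (where the only relevant tree is $P_3$, giving $\varepsilon=2=f(3)$).

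For $n\not\equiv 3\pmod 4$ the inductive step is immediate: delete a leaf $v$, so that $T-v$ is a tree on $n-1$ vertices, and combine monotonicity with the induction hypothesis to get $\varepsilon(T)\ge\varepsilon(T-v)\ge f(n-1)=f(n)$. Thus the entire difficulty is concentrated in the case $n\equiv 3\pmod 4$, where I must produce an odd-even subgraph with exactly two more edges than the trivial leaf-deletion bound would yield.

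My strategy for $n\equiv 3\pmod 4$ is to \emph{peel a pendant subtree} $A$, that is, one attached to the rest of $T$ by a single edge, so that $T-A$ is again a tree; inside $A$ I take a vertex-disjoint even star or $P_3$ using only internal edges of $A$ (never the boundary edge), contributing $e_A$ edges, and then recurse on $T-A$. This closes the step whenever $e_A\ge f(n)-f(n-|A|)$. Using a longest path $v_0v_1\cdots v_d$ and the support vertex $v_1$, I split into cases. If $v_1$ carries $t\ge 2$ leaves, I peel the pendant star at $v_1$ and retain an even substar with $t'\in\{t,t-1\}$ edges; a short mod-$4$ computation (with the few tiny remainders treated directly) shows $t'\ge f(n)-f(n-t-1)$ for every $t\ge 2$. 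If $v_1$ carries a single leaf and its path-neighbour $v_2$ has degree $2$, then $\{v_0,v_1,v_2\}$ is a pendant $P_3$ contributing exactly the two edges needed over $f(n-3)$. Moreover, if any vertex of $T$ has two leaf-children, the star-peeling applies there verbatim, so I may further assume every support vertex has exactly one leaf.

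The main obstacle is the remaining configuration: every support vertex has degree $2$ with a single leaf, and each such vertex is attached to a vertex of degree at least $3$ — the spider-like trees, the smallest being the spider with three legs of length $2$. Here no pendant $P_3$ or star that is simultaneously edge-rich enough and leaves the complement connected can be peeled, so the induction stalls. I expect to resolve this by analysing the bounded-height (at most $2$) pendant subtree $A$ hanging at $v_2$ directly: using $v_2$ as the centre of a spider with legs of length $2$ (which is odd-even) and combining it with the remaining cherry components should yield an odd-even subgraph of $A$ whose edge count meets the deficit $f(n)-f(n-|A|)$. Checking that this construction always attains the deficit — precisely the regime where the mod-$4$ bound is tight — is the delicate point; an alternative is to isolate these spider-like trees as a separate family and verify the bound for them by a direct construction, exploiting the fact that such a tree is often itself odd-even.
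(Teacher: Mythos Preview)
Your reduction to the single residue $n\equiv 3\pmod 4$ via leaf deletion and the identity $f(n)=f(n-1)$ is clean and correct, and the star-peeling and pendant-$P_3$ cases check out. The gap is exactly where you flag it: the ``spider-like'' residual configuration is not handled, and the constructions you propose for it do not work as stated. For the smallest instance, the spider with three legs of length~$2$ ($n=7$), your peel at $v_2$ gives $A\cong P_5$ with $\varepsilon(A)=2$, while the deficit is $f(7)-f(2)=4$; and the suggested fix ``use $v_2$ as centre of a spider with legs of length~$2$'' fails because with only two legs the centre has even degree, so the subgraph is not odd--even. The phrase ``if any vertex has two leaf-children, the star-peeling applies there verbatim'' is also an over-claim: it needs that vertex to have a \emph{single} non-leaf neighbour, which holds for $v_1$ and for the height-$1$ children of $v_2$, but not for $v_2$ itself. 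So even before the acknowledged hard case you have not actually reduced to $p\le 1$ at $v_2$. In short, the induction genuinely stalls, and what remains is not a routine verification: one has to exploit structure that crosses the cut edge $v_2v_3$.

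The paper takes a different route. Instead of peeling at a longest-path end, it removes an arbitrary edge $uv$ and splits into $T_u, T_v$; if either piece has order $\equiv 0,3\pmod 4$ the induction closes immediately, so the hard case becomes: \emph{for every edge} the two sides have orders $\equiv 1$ and $\equiv 2\pmod 4$. It then fixes a vertex $x$, classifies the components of $T-x$ by their residues, and further distinguishes the $\equiv 1$ components as ``pairable'' or ``impairable'' according to whether a maximum odd--even subgraph of the union of two such components (through $x$) uses $x$ with degree~$2$ or~$1$. A somewhat intricate parity/count analysis, together with a claim that one can choose $x$ so that all $\equiv 2$ components have order exactly~$2$, finishes the argument. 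This machinery is heavier than your peeling, but it is precisely what lets the proof combine contributions \emph{through} the central vertex---the move your approach is missing. If you want to rescue your strategy, you will need an analogue of that pairing idea at $v_2$ rather than a purely local construction inside the pendant subtree.
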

\begin{proof} Let $T$ be a tree of order $n$. It is enough to show that $\varepsilon(T)\geq f(n)$.
Note that
$$
\varepsilon(K_{1,n-1})=\left \{
\begin{array}{ll}
n-1, & \mbox{if $n\equiv 1$ $(\hspace{-4mm}\mod 2)$}\\
n-2, &\mbox{if $n\equiv 0$ $(\hspace{-4mm}\mod 2)$.}
\end{array}
\right.
$$
Also, $\varepsilon(P_4)=2=f(4)$. So, now let $n\geq 5$. For $e=uv\in E(T)$, let $T_u$ and $T_v$ be the two components of $T-uv$ containing $u$ and $v$, respectively. For simplicity, $n_u=|V(T_u)|$ and $n_v=|V(T_v)|$. We proceed with the induction on $n$. We divide four cases in terms of order.

\vspace{3mm}\noindent{\bf Case 1.} There exists an edge $uv\in E(T)$ such that $n_u\equiv 0$ or $3$ (mod 4), or $n_v\equiv 0$ or $3$ (mod 4).

\vspace{2mm} By symmetry, it suffices to consider the following two possibilities.

\vspace{2mm}\noindent{\bf Subcase 1.1.} $n_u\equiv 0$ (mod 4)

\vspace{2mm} Clearly, $n\equiv n_v$ (mod 4). Moreover, by the induction hypothesis, $\varepsilon(T_u)\geq \frac{n_u}{2}$ and $\varepsilon(T_v)\geq f(n_v)$. Thus, $\varepsilon(T)\geq \frac{n_u}{2}+f(n_v)=f(n)$.

\vspace{2mm}\noindent{\bf Subcase 1.2.} $n_u\equiv 3$ (mod 4)

Since
$$
n= \left \{
\begin{array}{ll}
3\ (\hspace{-4mm}\mod 4), & \mbox{if $n_v\equiv 0$ (mod 4) }\\
0\ (\hspace{-4mm}\mod 4), & \mbox{if $n_v\equiv 1$ (mod 4) }\\
1\ (\hspace{-4mm}\mod 4), & \mbox{if $n_v\equiv 2$ (mod 4) }\\
2\ (\hspace{-4mm}\mod 4), &\mbox{if $n_v\equiv 3$ (mod 4), }
\end{array}
\right.
$$
and by the induction hypothesis $\varepsilon(T_u)\geq \frac{n_u+1}{2}$,
we have $\varepsilon(T)\geq \frac{n_u+1}{2}+f(n_v)\geq f(n)$.

\vspace{3mm}\noindent{\bf Case 2.} There exists an edge $uv\in E(T)$ such that $n_u\equiv 2$ (mod 4) and $n_v\equiv 2$ (mod 4).

\vspace{2mm} By the assumption, $n\equiv 0$ (mod 4). Let $T'=T-x$, where $x$ is a leaf. Obviously, $v(T')=n-1\equiv 3$ (mod 4). Moreover, by the induction hypothesis, $\varepsilon(T')\geq \frac{v(T')+1}{2}$. Thus, $\varepsilon(T)\geq \varepsilon(T')\geq \frac{v(T')+1}{2}=\frac {n} 2$.

\vspace{3mm}\noindent{\bf Case 3.} There exists an edge $uv\in E(T)$ such that $n_u\equiv 1$ (mod 4) and $n_v\equiv 1$ (mod 4).

\vspace{2mm} By the assumption, $n\equiv 2$ (mod 4). Take such an edge $uv$ as described in the assumption. By the induction hypothesis, $\varepsilon(T_u)\geq \frac{n_u-1}{2}$, $\varepsilon(T_v)\geq \frac{n_v-1}{2}$, it follows that $\varepsilon(T)\geq \frac{n_u+n_v-2}{2}=\frac {n-2} 2$.

\vspace{3mm}\noindent{\bf Case 4.} $n_u\equiv 1$ (mod 4) and $n_v\equiv 2$ (mod 4) for any edge $uv\in E(T)$.

\vspace{2mm}  It is clear that $n\equiv 3$ (mod 4).
By the induction hypothesis, $\varepsilon(T_u)\geq \frac{n_u-1} 2$ $\varepsilon(T_v)\geq \frac{n_v-2} 2$. If there exists an edge $uv$ such that $\varepsilon(T_u)>\frac{n_u-1} 2$ or $\varepsilon(T_v)>\frac{n_v-2} 2$, then we are done. So, in what follows, we assume that for any edge $uv$,
\begin{equation}\varepsilon(T_u)=\frac{n_u-1} 2\ \text{and}\ \varepsilon(T_v)=\frac{n_v-2} 2.\end{equation}

Consider a vertex $x\in V(T)$. Let $T_1, \ldots, T_l$ be all components of $T-x$, where $l=d_T(x)$. Let $n_i$ be the order of $T_i$ and $x_i$ be the neighbor of $x$ in $T_i$ for each $i\in\{1, \ldots, l\}$. Let $T_i'=T_i+xx_i$ for each $i$. Without loss of generality, let $a$ be the integer between 0 and $l$ such that 
$$
n_i\equiv \left \{
\begin{array}{ll}
1~(\hspace{-2.5mm}\mod 4), & \mbox{if $i\leq a$  }\\
2~(\hspace{-2.5mm}\mod 4), & \mbox{if $i\geq a+1$.}\\
\end{array}
\right.
$$
Let $b=l-a$. Since $n\equiv a+2b+1\equiv 3 ~(\hspace{-2.5mm}\mod 4)$, $a+2b\equiv 2~ (\hspace{-2.5mm}\mod 4)$. Thus, $a$ is even.
Furthermore, if $a\equiv 2 ~(\hspace{-2.5mm}\mod 4)$, then $b$ is even, and if  $a\equiv 0~(\hspace{-2.5mm}\mod 4)$, then $b$ is odd.

\begin{center}
\scalebox{0.30}[0.30]{\includegraphics{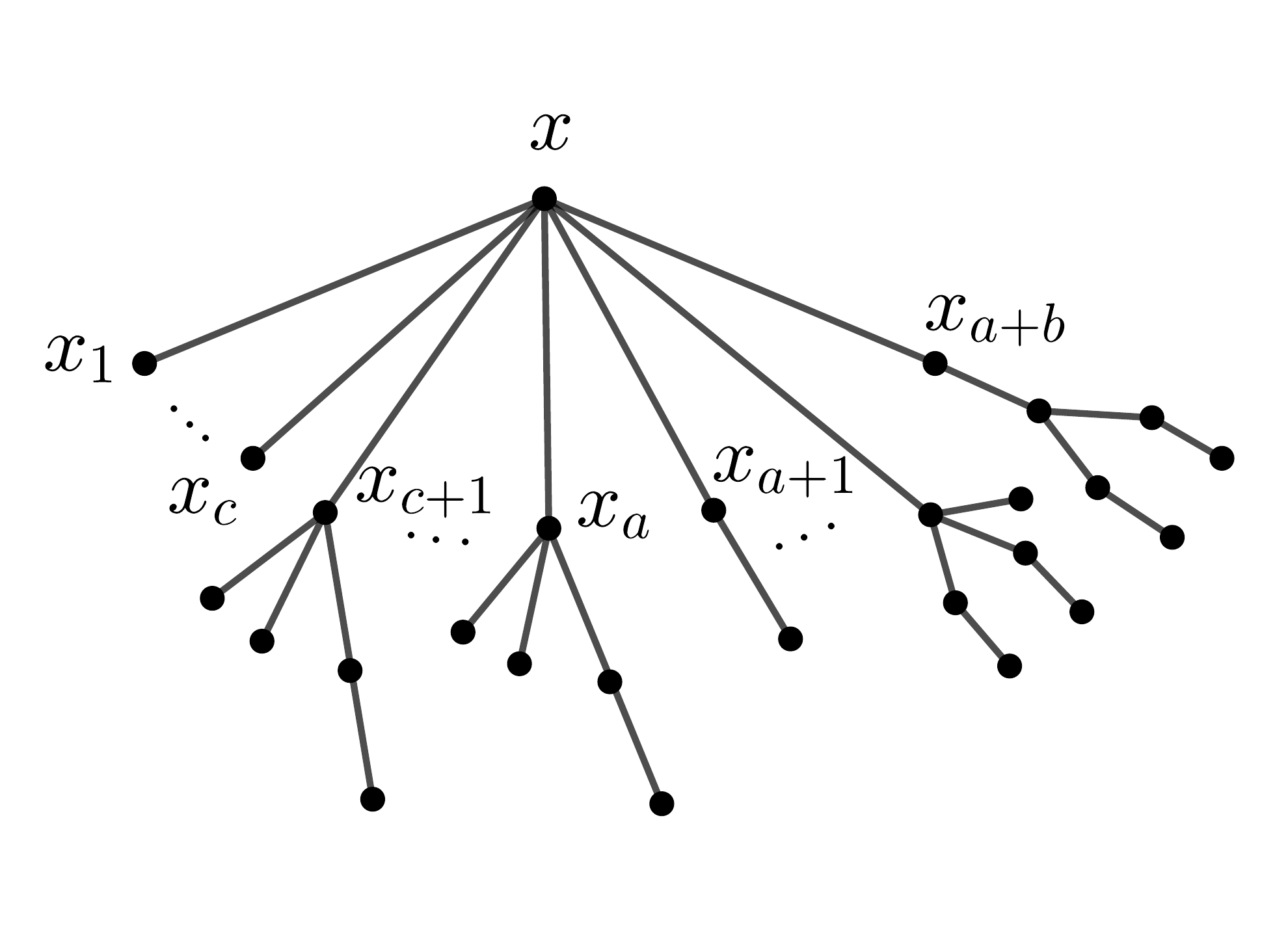}}
\centerline{Fig. 4.~The components of $T-x$}
\end{center}

Let $i, j\in \{1,2,\ldots,a\}$.
Since $T_i'\cup T_j'$ is a tree of order congruent to $3~(\hspace{-2.5mm}\mod 4)$, by the induction hypothesis, $\varepsilon(T_i'\cup T_j')\geq \frac{n_i+n_j+2} 2$. 
On the other hand, by (1) $\varepsilon(T_i)=\frac{n_i-1} 2$ and $\varepsilon(T_j)=\frac{n_j-1} 2$. 
Clearly, $1\leq d_{B_{ij}}(x)\leq 2$ for an odd-even edge induced subgraph $B_{ij}$ of size $\varepsilon(T_i'\cup T_j')$ in $T_i'\cup T_j'$. If $d_{B_{ij}}(x)=2$, then both $T_i$ and $T_j$ are said to be {\it pairable}.  If $d_{B_{ij}}(x)=1$, then $d_{B_{ij}}(x_i)=1$ or $d_{B_{ij}}(x_j)=1$. The component $T_i$ with $d_{B_{ij}}(x_i)=1$ (or $T_j$ with $d_{B_{ij}}(x_j)=1$) is called {\it impairable}.  
Note that $T_i$ is pairable if and only if $\varepsilon(T_i')=\frac{n_i-1} 2$ and $T_i$ is impairable if and only if $\varepsilon(T_i')\geq \frac{n_i+3} 2$. 
Thus, whether a component $T_i$ being pairable or not does not depend upon the choice of $T_j$.
Let $c$ be the number of pairable components, and $d=a-c$. Since $a$ is even, $c$ and $d$ have the same parity. Without loss of generality, assume that $T_i$ is pairable for each $i\in\{1, \ldots, c\}$ and the remaining ones are impairable. One can see that $x_i$ is pairable for each $i\leq c$ and the component containing $x_d$ is impairable, as illustrated in Fig. 4.

We distinguish the following cases.

\vspace{3mm}\noindent{\bf Subcase 4.1.} $a\equiv 2~ (\hspace{-2.5mm}\mod 4)$

\vspace{2mm} Recall that $b$ is even.

\vspace{2mm}\noindent{\bf Subcase 4.1.1.} Both $c$ and $d$ are odd.
\begin{eqnarray*}
\varepsilon(T)&\geq&\sum_{i=1}^c \frac{n_i-1} 2+\sum_{i=c+1}^{c+d} \frac{n_i+3} 2+ \sum_{i=a+1}^{a+b} \frac{n_i+2} 2\\
&=& \sum_{i=1}^l \frac{n_i} 2 +\frac{3d+2b-c} 2 \\
&=& \frac{n+1} 2+\frac{3d+2b-c-2} 2.
\end{eqnarray*}

It follows that if $c\leq 3d+2b-2$, then $\varepsilon(T)\geq \frac {n+1} 2$.
Since both $c$ and $d$ are odd, it remains to consider the case when \begin{equation} c\geq 3d+2b. \end{equation}
\begin{eqnarray*}
\varepsilon(T)&\geq&\sum_{i=1}^{\frac{c-1} 2} \frac{n_i+n_{c-i}+2} 2+\sum_{i=c}^{c+d} \frac{n_i-1} 2+ \sum_{i=a+1}^{a+b} \frac{n_i-2} 2\\
&=& \sum_{i=1}^l \frac{n_i} 2 +\frac{(c-1)-(d+1)-2b} 2 \\
&=& \frac{n+1} 2+\frac{c-d-2b-4} 2.
\end{eqnarray*}

It follows that if $c\geq d+2b+4$, then $\varepsilon(T)\geq \frac {n+1} 2$.
Since both $c$ and $d$ are odd, next assume that \begin{equation} c\leq d+2b+2. \end{equation}

Combining (2) and (3), $3d+2b\leq c\leq d+2b+2$. It follows that $d=1$ and $c=2b+3$. However, $a=c+d=2b+4\equiv 0 ~(\hspace{-2.5mm}\mod 4)$, contradicting the assumption that $a\equiv 2 ~(\hspace{-2.5mm}\mod 4)$.

\vspace{2mm}\noindent{\bf Subcase 4.1.2.} Both $c$ and $d$ are even.
\begin{eqnarray*}
\varepsilon(T)&\geq&\sum_{i=1}^c \frac{n_i-1} 2+\sum_{i=c+1}^{c+d} \frac{n_i+3} 2+ \sum_{i=a+1}^{a+b-1} \frac{n_i+2} 2+\frac{n_{a+b}-2} 2\\
&=& \sum_{i=1}^l \frac{n_i} 2 +\frac{3d+2(b-1)-c-2} 2 \\
&=& \frac{n+1} 2+\frac{3d+2b-c-6} 2.
\end{eqnarray*}

It follows that if $c\leq 3d+2b-6$, then $\varepsilon(T)\geq \frac {n+1} 2$.
Since both $c$ and $d$ are even, it remains to consider the case when \begin{equation} 3d+2b-4\leq c. \end{equation}

\begin{eqnarray*}
\varepsilon(T)&\geq&\sum_{i=1}^{\frac c 2} \frac{n_i+n_{c+1-i}+2} 2+\sum_{i=c}^{c+d} \frac{n_i-1} 2+ \sum_{i=a+1}^{a+b} \frac{n_i-2} 2\\
&=& \sum_{i=1}^l \frac{n_i} 2 +\frac{c-d-2b} 2 \\
&=& \frac{n+1} 2+\frac{c-d-2b-2} 2.
\end{eqnarray*}

It follows that if $c\geq d+2b+2$, then $\varepsilon(T)\geq \frac {n+1} 2$.

Since both $c$ and $d$ are even, we assume that \begin{equation} c\leq d+2b. \end{equation}

Combining (4) and (5), we have $$2b+3d-4\leq c\leq 2b+d,$$ implying $d=0$ or $d=2$.

\vspace{2mm} If $d=2$, then $c=2b+2$. But, $a=c+d=2b+4\equiv 0 ~(\hspace{-2.5mm}\mod 4)$ contradicts the assumption $a\equiv 2~(\hspace{-2.5mm}\mod 4)$. If $d=0$, then $2b-4\leq c\leq 2b.$ Combining this with the facts that $c=a$ and $b$ is even, we have $c=2b-2$.

\vspace{3mm}\noindent{\bf Subcase 4.2.} $a\equiv 0~ (\hspace{-2.5mm}\mod 4)$

\vspace{2mm} Recall that $b$ is odd.

\vspace{2mm}\noindent{\bf Subcase 4.2.1.} Both $c$ and $d$ are even.
\begin{eqnarray*}
\varepsilon(T)&\geq&\sum_{i=1}^c \frac{n_i-1} 2+\sum_{i=c+1}^{c+d} \frac{n_i+3} 2+ \sum_{i=a+1}^{a+b} \frac{n_i+2} 2\\
&=& \sum_{i=1}^l \frac{n_i} 2 +\frac{3d+2b-c} 2 \\
&=& \frac{n+1} 2+\frac{3d+2b-c-2} 2.
\end{eqnarray*}

It follows that if $3d+2b-2\geq c$, then $\varepsilon(T)\geq \frac {n+1} 2$.
Since both $c$ and $d$ are even, it remains to consider the case when \begin{equation} 3d+2b\leq c. \end{equation}

\begin{eqnarray*}
\varepsilon(T)&\geq&\sum_{i=1}^{\frac{c} 2} \frac{n_i+n_{c+1-i}+2} 2+\sum_{i=c}^{c+d} \frac{n_i-1} 2+ \sum_{i=a+1}^{a+b} \frac{n_i-2} 2\\
&=& \sum_{i=1}^l \frac{n_i} 2 +\frac{c-d-2b} 2 \\
&=& \frac{n+1} 2+\frac{c-d-2b-2} 2.
\end{eqnarray*}

It follows that if $c\geq d+2b+2$, then $\varepsilon(T)\geq \frac {n+1} 2$.
Since both $c$ and $d$ are even, next assume that \begin{equation} c\leq d+2b. \end{equation}

Combining (6) and (7), $3d+2b\leq c\leq d+2b$. Thus $d=0$ and $c=2b$. But, $a=2b\equiv 2~ (\hspace{-2.5mm}\mod 4)$, contradicts the assumption $a\equiv 0 ~(\hspace{-2.5mm}\mod 4)$.

\vspace{2mm}\noindent{\bf Subcase 4.2.2.} Both $c$ and $d$ are odd.
\begin{eqnarray*}
\varepsilon(T)&\geq&\sum_{i=1}^c \frac{n_i-1} 2+\sum_{i=c+1}^{c+d} \frac{n_i+3} 2+ \sum_{i=a+1}^{a+b-1} \frac{n_i+2} 2+\frac{n_{a+b}-2} 2\\
&=& \sum_{i=1}^l \frac{n_i} 2 +\frac{3d+2(b-1)-c-2} 2 \\
&=& \frac{n+1} 2+\frac{3d+2b-c-6} 2.
\end{eqnarray*}

It follows that if $c\leq 3d+2b-6$, then $\varepsilon(T)\geq \frac {n+1} 2$.
Since both $c$ and $d$ are odd, next assume that \begin{equation} c\geq 3d+2b-4. \end{equation}

\begin{eqnarray*}
\varepsilon(T)&\geq&\sum_{i=1}^{\frac {c-1} 2} \frac{n_i+n_{c-i}+2} 2+\sum_{i=c}^{c+d} \frac{n_i-1} 2+ \sum_{i=a+1}^{a+b} \frac{n_i-2} 2\\
&=& \sum_{i=1}^l \frac{n_i} 2 +\frac{(c-1)-(d+1)-2b} 2 \\
&=& \frac{n+1} 2+\frac{c-d-2b-4} 2.
\end{eqnarray*}

It follows that if $c\geq d+2b+4$, then $\varepsilon(T)\geq \frac {n+1} 2$.
Since both $c$ and $d$ are odd, next assume that \begin{equation} c\leq d+2b+2. \end{equation}

Combining (8) and (9), we have $2b+3d-4\leq c\leq 2b+d,$ implying $d=1$ or $d=3$. If $d=3$, then $c=2b+5$. But, $a=c+d=2b+8\equiv 2 ~(\hspace{-2.5mm}\mod 4)$ contradicts the assumption $a\equiv 0 ~(\hspace{-2.5mm}\mod 4)$.
If $d=1$, then $2b-1\leq c\leq 2b+3$. Since $a=c+d=c+1$, $a\equiv 0 ~(\hspace{-2.5mm}\mod 4)$ and $b\equiv 1$ or $3 (\hspace{-2mm}\mod 4)$, we have $a=2b+2$, and thus $c=2b+1$.

\vspace{3mm} Summing up the above, we conclude that
{\it if $\varepsilon(T)\leq \frac{n-3} 2$, then

(1) $d=0$, $c=2b-2$, $b>0$ is even, or $d=1$, $c=2b+1$, $b>0$ is odd;

(2) $\varepsilon(T_u)=\frac{n_u-1} 2$ and $\varepsilon(T_v)=\frac{n_u-2} 2$ for any $uv\in E(T)$.}

\vspace{3mm}\noindent {\bf Claim 1.} There exists a vertex $v$ such that all components of $T-v$ with order 2 ~(\hspace{-2.5mm}$\mod 4$) have order 2.

\begin{proof} By the remark above, $b>0$ for any vertex $x$. Take a vertex, say $v_1$. If $T-v_1$ satisfies our requirement, we are done. Otherwise, there is a component of $T-v_1$ having order 2 (mod 4) greater than 2. Let $v_2$ be the neighbor of $v_1$ in that component. Consider $T-v_2$. Again, let us check whether $T-v_2$ meets our requirement or not. If it is, let us stop. Otherwise, we repeat the above procedure and end up with a vertex $v$ as we want.
\end{proof}

Now let $x$ be the vertex of $T$ as we claimed. First, assume that $d=0$, $c=2b-2$, $b>0$ is even.
\begin{eqnarray*}
\varepsilon(T)&\geq&\sum_{i=1}^{\frac {c} 2} \frac{n_i+n_{c+1-i}+2} 2+b\\
&=& \sum_{i=1}^c \frac{n_i} 2+ c+ b \\
&=& \frac{n+1} 2+ c-\frac 1 2\\
&>& \frac{n+1} 2.
\end{eqnarray*}

Secondly, assume that $d=1$, $c=2b+1$, $b>0$ is odd.
\begin{eqnarray*}
\varepsilon(T)&\geq&\sum_{i=1}^{\frac {c-1} 2} \frac{n_i+n_{c-i}+2} 2+\frac {n_c+1} 2+ b+\frac{n_a-1} 2\\
&=& \sum_{i=1}^{c+1} \frac{n_i} 2+ (c-1)+ b \\
&=& \frac{n+1} 2+ c-\frac 3 2\\
&>& \frac{n+1} 2.
\end{eqnarray*}

The proof is completed.
\end{proof}

Recall that the well-known Vizing's theorem says that $\Delta(G)\leq \chi(L(G))\leq \Delta(G)+1$ for any simple graph $G$.

\begin{theorem} If $G$ is a graph of size $m\geq 2$ with no component of order at most two, then $$f_{o}(L(G))\geq \frac{m}{\chi(L(G))}.$$
\end{theorem}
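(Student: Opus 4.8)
The plan is to work with the equivalent quantity $\varepsilon(G)=f_o(L(G))$, the maximum size of an odd-even edge induced subgraph of $G$, and to use that a proper coloring of $L(G)$ is exactly a proper edge coloring of $G$, so $\chi(L(G))=\chi'(G)$, which by Vizing's theorem equals $\Delta(G)$ or $\Delta(G)+1$. Writing $k=\chi'(G)$, the goal becomes $\varepsilon(G)\ge m/k$. First I would reduce to the connected case: an odd-even edge induced subgraph may be chosen independently in each component, so $\varepsilon$ is additive over components, while $\chi'$ of any component is at most $k$. Since the hypothesis forbids components of order at most two, every component is a connected graph on at least three vertices, and it suffices to prove $\varepsilon(C)\ge e(C)/\chi'(C)$ for each such component $C$; summing and using $\chi'(C)\le k$ then recovers $m/k$.

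Second, fix a proper edge coloring of the (connected) graph with $k$ colors and let $M_1,M_2$ be the two largest color classes, so that $|M_1|+|M_2|\ge 2m/k$. Their union $U=M_1\cup M_2$ has maximum degree at most $2$, hence is a disjoint union of paths and even cycles. Because any odd-even edge induced subgraph of $U$ is also one of $G$, we get $\varepsilon(G)\ge \varepsilon(U)=\sum_C \varepsilon(C)$, the sum running over the components of $U$.

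Third, the key lemma computes $\varepsilon$ on paths and cycles. In a path or cycle, any subpath of $F$ with at least three edges contains an edge joining two vertices of degree two, which has the wrong parity; hence an odd-even edge induced subgraph is exactly a vertex-disjoint union of cherries ($P_3$'s), and its maximum size equals $2\lfloor v(C)/3\rfloor$. A short check then yields $\varepsilon(C)\ge e(C)/2$ for every path or even-cycle component $C$ with at least two edges, with equality at $P_5$ and $C_4$. Summing over the components of $U$ gives $\varepsilon(U)\ge (|U|-I)/2$, where $I$ counts the single-edge (isolated) components of $U$; if $I=0$ this already gives $\varepsilon(G)\ge |U|/2\ge m/k$.

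Finally, the main obstacle is precisely these isolated edges: a single-edge component of $U$ contributes $1$ to $|U|$ but $0$ to the bound, while a single color class alone only yields the weaker estimate $m/(2k)$ (essentially Scott's bound), so both large classes are genuinely needed. This is exactly where the hypothesis ``no component of order at most two'' must enter: an isolated edge $uv$ of $U$ lies in a component of $G$ of order at least three, hence is incident to an edge $g\notin U$. The plan is to enlarge $U$ by one such adjacent edge per isolated component, turning each isolated edge into part of a component with at least two edges, and to argue---choosing the added edges vertex-disjoint from one another and from the cherries used elsewhere---that the resulting odd-even subgraph still has at least $m/k$ edges. I expect the careful bookkeeping of this absorption step (equivalently, eliminating all isolated edges by a Vizing-type recoloring without shrinking $|M_1|+|M_2|$ below $2m/k$) to be the crux of the proof.
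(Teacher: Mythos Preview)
Your approach is genuinely different from the paper's. The paper never looks at two color classes; instead it fixes a spanning tree $T$ of (each component of) $G$, invokes Theorem~\ref{tree} to get $\varepsilon(T)\ge f(n)$ with $f(n)\approx n/2$, and then uses the elementary inequality $n/2\ge m/\Delta(G)\ge m/\chi'(G)$ to finish when $n\equiv 0,1,3\pmod 4$. The only delicate case is $n\equiv 2\pmod 4$, where $f(n)=(n-2)/2$ falls one short; there the paper does a structural analysis (longest path in a spanning tree of maximum diameter, a ``$(3,3)$-partition'' of $V(G)$) to squeeze out the missing unit. So the paper's engine is the tree lower bound, not anything about matchings.

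Your plan, by contrast, is clean up to the point where you have $\varepsilon(U)\ge(|U|-I)/2$ with $|U|\ge 2m/k$, but it has a real gap at the step you yourself call the crux. You have not shown how to absorb the isolated edges, and the specific mechanism you sketch (attach one extra edge to each isolated $uv$, keeping the added edges vertex-disjoint from each other and from the cherries already chosen) can fail outright. Take the spider $S_t$ with center $v$ and $t$ legs $v a_i b_i$; here $m=2t$, $\chi'=t$, $m/k=2$. With the coloring $va_i\mapsto i$ and $a_ib_i\mapsto \sigma(i)$ for a derangement $\sigma$ satisfying $\sigma^{-1}(\{1,2\})\cap\{1,2\}=\emptyset$, the two largest classes give $U=\{va_1,va_2\}\cup\{a_jb_j,a_\ell b_\ell\}$ with $j,\ell\notin\{1,2\}$: one cherry through $v$ and two isolated edges. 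The \emph{only} edge of $G$ adjacent to either isolated edge is $va_j$ (resp.\ $va_\ell$), and these meet $v$, which is already the degree-$2$ vertex of your cherry. So the ``vertex-disjoint absorption'' you propose is impossible here, and a Vizing recoloring does not help either: any two color classes in any proper $t$-edge-coloring of $S_t$ still give $\varepsilon(U)=2$. In this particular example you are saved because $2=m/k$, but the example shows that your mechanism does not produce the promised extra cherries; to push the argument through you would need a different idea that genuinely exploits the hypothesis ``no component of order at most two'' (which so far you have only invoked to guarantee \emph{existence} of an adjacent edge, not its usability). As written, the proof stops exactly where the difficulty begins.
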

\begin{proof} Let $k=\chi(L(G))$.
We may assume that $G$ is a connected graph of order $n$. Take a spanning tree $T$ of $G$.

\vspace{2mm}\noindent{\bf Observation 1.} If $f_{o}(L(G))\geq \frac n 2$, we are done.
Since $\chi(L(G))\geq \Delta(G)$ and $n\Delta(G)\geq 2m$, we have $$ \frac n 2\geq \frac m {\Delta} \geq \frac m {k}.$$

\vspace{2mm}\noindent{\bf Case 1.} $n\equiv 0\ \text{or}\ 3$ (mod 4)

\vspace{2mm} By Theorem \ref{tree}, $f_o(L(T))\geq \frac n 2$, and thus there is nothing to be shown.

\vspace{2mm}\noindent{\bf Case 2.} $n\equiv 1$ (mod 4)

\vspace{2mm} Let ${M_1, \ldots, M_k}$ be a vertex coloring of $L(G)$ with $k$ colors. Since $M_i$ is a matching of $G$ and $n$ is odd, $|M_i|\leq \frac {n-1} 2$ for each $i\in\{1, \ldots, k\}$. Hence, $\frac {n-1} 2 k=\sum_{i=1}|M_i|=m$, i.e. $\frac {n-1} 2\geq \frac m k$. Combining this with $f_o(L(T))\geq \frac {n-1} 2$ according to Theorem \ref{tree}, we have $f_{o}(L(G))\geq \frac{m}{\chi(L(G))}$.

\vspace{2mm}\noindent{\bf Case 3.} $n\equiv 2$ (mod 4)

\vspace{2mm} By Theorem \ref{tree}, $f_o(L(T))\geq \frac {n-2} 2$. If $\frac{n-2}{2}\geq \frac{m}{k}$, then we are done. Next, assume that $\frac{n-2}{2}<\frac{m}{k}$. Since $n$ is even, it is equivalent to \begin{equation}2m\geq (n-2)k+2.\end{equation}
We further assume that $diam(T)$ is maximum among all spanning trees of $G$. By the choice of $T$, if $T\cong K_{1,n-1}$, then $G\cong K_{1,n-1}$. It is easy to see that $L(G)\cong K_{n-1}$, and thus $f_o(L(G))\geq n-2\geq 1=\frac{n-1}{n-1}=\frac{m} {k}$. So, we consider the case when $diam(T)\geq 3$. Take a longest path $P=u_1u_2\cdots u_p$ of $T$. Clearly, $p=diam(T)+1\geq 4$. Let $N_T(u_2)\setminus \{u_3\}=\{v_1,v_2,\cdots, v_s\}$, where $v_1=u_1$, $d_T(v_i)=1$ for each $i\in\{1, \ldots, s\}$.
Label the two components of $T-u_2u_3$ as $T_{u_2}$ and $T_{u_3}$.
Since $\varepsilon(T)\geq \varepsilon(T_{u_2})+\varepsilon(T_{u_3})$,
$T_{u_2}\cong K_{1,s}$, by Theorem 3.2 $\varepsilon(T_v)\geq \frac{n-(s+1)-2}{2}$ and
$$
\varepsilon(K_{1,s})=\left \{
\begin{array}{ll}
s, & \mbox{if $s\equiv 0$ $(\hspace{-4mm}\mod 2)$}\\
s-1, &\mbox{if $s\equiv 1$ $(\hspace{-4mm}\mod 2)$,}
\end{array}
\right.
$$
we have
\begin{equation} f_{o}(L(G))\geq \varepsilon(T)\geq \left \{
\begin{array}{ll}
\frac {n}{2}, & \mbox{if $s\geq 5$ }\\
\frac {n+1}{2}, & \mbox{if $s=4$  }\\
\frac {n+2}{2}, &\mbox{if $s=2$. }
\end{array}
\right.
\end{equation}

By Observation 1, $f_{o}(L(G))\geq\frac{m}{\chi(L(G))}$ for each of the above cases listed in (11). Thus, it remains to consider $s\in \{1,3\}$.

\vspace{2mm}
A vertex partition $(V_1,V_2)$ of a connected graph $G$ is called a {\it $(3,3)$-partition} if $G[V_i]$ is connected and $|V_i|\equiv 3$ (mod 4) for each $i\in \{1,2\}$. If $G$ has such a partition, then by the induction hypothesis and Theorem \ref{tree}, $$f_o(L(G))\geq \varepsilon(T_1)+\varepsilon(T_2)\geq \frac {|V_1|+1} 2+\frac {|V_2|+1} 2=\frac {n+2} 2>\frac{m}{\Delta(G)}\geq \frac{m}{\chi(L(G))},$$ where $T_i$ is a spanning tree of $G[V_i]$ for each $i\in \{1,2\}$. So, next we complete the proof by showing either $f_o(L(G))\geq \frac n 2$ or the existence of (3,3)-partition of $G$.

\vspace{2mm}\noindent{\bf Subcase 3.1.} $s=3$

\vspace{2mm} Let $V_1=\{v_1,v_2,v_3\}$ and $V_2=V(G)\setminus V_1$.
If $G[V_1]$ is connected, then $(V_1,V_2)$ is a $(3,3)$-partition of $G$, and we are done.
So, assume that $G[V_1]$ is disconnected. Clearly, $V_1$ is an independent set of $G$, otherwise, one can find a spanning tree $T'$ with $diam(T')>diam (T)$. If $e_G(V_1, V(G)\setminus (\{v\}\cup V_1))=0$, then \begin{equation}2m\leq 3+(n-3)\Delta(G).\end{equation} Combining (10) and (12), one has $\Delta(G)\leq 1$, which contradicts $\Delta(G)\geq d_T(v)\geq 4$.
Thus, one of $\{v_1, v_2, v_3\}$, say $v_1$, has a neighbor belonging to $V(G)\setminus (\{v\}\cup V_1)$ in $G$. This results in a $(3,3)$-partition $(V_1',V_2')$ of $G$, where $V_1'=\{v_2, v_3, v\}$.

\vspace{2mm}\noindent{\bf Subcase 3.2.} $s=1$

\vspace{2mm} It means $d_T(u_2)=2$. By symmetry, we may further assume that $d_T(u_p)=1$ and $d_T(u_{p-1})=2$. By the choice of $T$ and $P$, if $p=4$, then $n=p=4$, and if $p=5$, then $T$ is isomorphic to a wounded spider, as shown in Fig. 5.
\begin{center}
\scalebox{0.25}[0.25]{\includegraphics{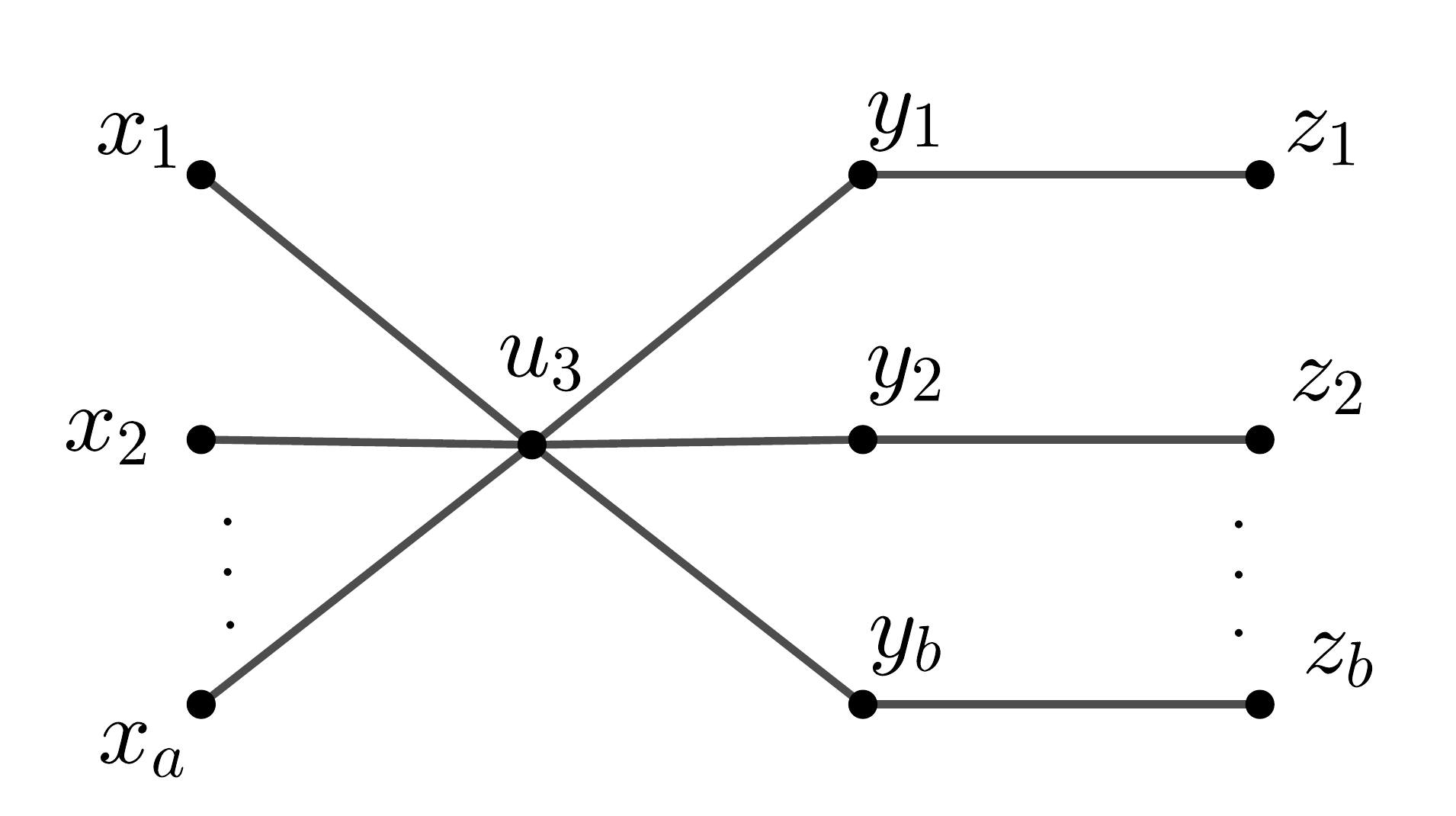}}
\centerline{Fig. 5.~The wounded spider}
\end{center}
For the former case ($p=4$), since $n=4$, it can be verified that $f_0(L(G))=2$ and $2\leq k\leq 3$; moreover, $k=2$ if and only if $G\in\{C_4, P_4\}$, $k=3$ if and only if $\Delta(G)=3$, and thus $f_o(L(G))\geq \frac m k$.

Now let $p=5$.
Since $a+2b+1=n\equiv 2 (\hspace{-2mm}\mod 4)$, $a$ is odd. Thus, if $a+b$ is even, then $\varepsilon(G)\geq a+b\geq \frac{a+2b+1} 2=\frac n 2$. So, let $a+b$ be odd. If $a\geq 3$, then $\varepsilon(G)\geq a+b-1\geq \frac{a+2b+1} 2=\frac n 2$. For $a=1$, if $b=2$, then $\varepsilon(G)=2\geq \frac 5 3=\frac m k$; if $b\geq 4$, $\varepsilon(G)\geq 2b-2\geq \frac{2b+2} 2=\frac n 2$.

Finally, let $p\geq 6$.
If $d_T(u_3)=2$, then $T$ has a (3,3)-partition $(V_1,V_2)$, where $V_1=\{u_1,u_2,u_3\}$ and $V_2=V(G)\setminus V_1$. So, let $d_T(u_3)\geq 3$. In what follows, suppose $G$ has no (3,3)-partition.
Let $v_1, \ldots, v_a$ be the isolated vertices and $w_1r_1, \ldots, w_br_b$ be the isolated edges of $T-u_3$ where $r_i\in N_T(u_3)$ for each $i$, $r_1=u_2$ and $w_1=u_1$.
Let $V'=\{v_i: 1\leq i\leq a\}$, $R'=\{r_i: 1\leq i\leq b\}$ and $W'=\{w_i: 1\leq i\leq b\}$.
By the choice of $T$ and $P$, we have $G[V'\cup W'\cup R']\cong T[V'\cup W'\cup R']$ and $e_G[V'\cup W'\cup R', V(G)\setminus (V'\cup W'\cup R'\cup\{ u_3\})=0$. It follows that for any $i\in \{1, \ldots, a\}$ and $j\in \{1, \ldots, b\}$,
\begin{equation}
d_G(v_i)=1, d_G(r_j)\leq 2, d_G(w_j)\leq 2.
\end{equation}

By exchanging the roles of $u_1, u_2, u_3$ and that of $u_p, u_{p-1}, u_{p-2}$, we make the following assumption as above.
Let $x_1, \ldots, x_c$ be the isolated vertices and $y_1z_1, \ldots, y_dz_d$ be the isolated edges of $T-u_{p-2}$, where $y_i\in N_T(u_{p-2})$ for each $i$, $y_1=u_{p-2}$ and $z_1=u_p$.
Let $X'=\{x_i: 1\leq i\leq c\}$, $Y'=\{y_i: 1\leq i\leq d\}$ and $Z'=\{z_i: 1\leq i\leq d\}$.
By the choice of $T$ and $P$, we have $G[X'\cup Y'\cup Z']\cong T[X'\cup Y'\cup Z']$ and $e_G[X'\cup Y'\cup Z', V(G)\setminus (X'\cup Y'\cup Z'\cup\{ u_{p-2}\})=0$.
It follows that for any $i\in \{1, \ldots, c\}$ and $j\in \{1, \ldots, c\}$,
\begin{equation}
d_G(x_i)=1, d_G(y_j)\leq 2, d_G(z_j)\leq 2.
\end{equation}
One can see the local structure of $T$ around $u_3$ and $u_{p-2}$ in Fig. 6.
\begin{center}
\scalebox{0.32}[0.32]{\includegraphics{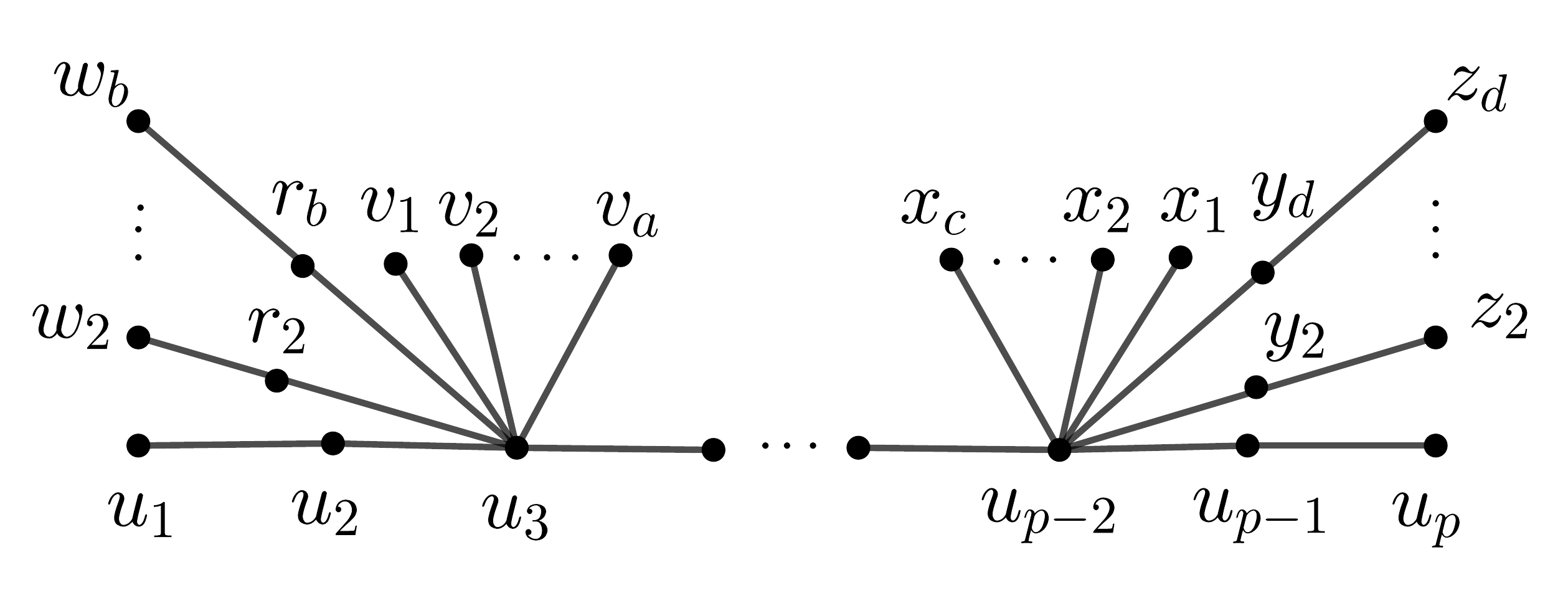}}
\centerline{Fig. 6.~The local structure of $T$ around $u_3$ and $u_{p-2}$}
\end{center}

By (10), (13) and (14), we have $$a+4b+c+4d+(n-a-2b-c-2d))\Delta(G)\geq 2m\geq (n-2)\Delta+2,$$ i.e.
\begin{equation}
a+4b+4d\geq (a+c+2b+2d-2)\Delta(G)+2.
\end{equation}
Since $\Delta(G)\geq d_T(u_3)\geq 3$, by (15), we have $a+b+c+d\leq 2$, which contradicts $a+b=d_T(u_3)-1\geq 2$ and $c+d=d_T(u_{p-2})-1\geq 2$.
This proves $G$ has a (3,3)-partition. By the same reasoning as in the discussion before Subcase 3.1, we conclude that $f_o(L(G))\geq \frac{m} {\chi(L(G))}$.
\end{proof}

Let $p_3(G)$ be the maximum number of vertex disjoint $P_3$ in $G$.
\begin{theorem}[Kelmans and  Mubayi \cite{Kelmans2004}]\label{Kelmans}
If $G$ is a cubic graph of order $n$, then $p_3(G)\geq \frac n 4$.
\end{theorem}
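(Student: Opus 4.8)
The plan is to work with a maximum packing of vertex-disjoint copies of $P_3$ and to control the set of vertices it misses. Let $\mathcal{P}$ be a collection of $k=p_3(G)$ pairwise vertex-disjoint $P_3$'s, and let $U$ be the set of the $n-3k$ vertices covered by none of them. Since $k\ge n/4$ is equivalent to $n-3k\le n/4$ and hence to $|U|\le k$, the whole problem reduces to proving the single inequality $|U|\le k$: the number of missed vertices is at most the number of paths used.

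First I would record the structural facts forced by maximality. Because $\mathcal{P}$ is maximum, the induced subgraph $G[U]$ contains no $P_3$; a $P_3$-free graph has maximum degree at most $1$, so $G[U]$ is a disjoint union of edges and isolated vertices, i.e. a matching. As $G$ is cubic, every $u\in U$ then has at most one neighbour inside $U$ and therefore at least two neighbours among the $3k$ covered vertices. Summing over $U$, the number of edges between $U$ and the covered set is at least $2|U|$; on the other hand each $P_3=a\,b\,c$ sends out at most five edges (two from each endpoint, one from the centre), so this edge count is at most $5k$. This crude double count only yields $|U|\le \tfrac52 k$, which is far from enough, and pinpoints where the real work lies.

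The heart of the argument is a sequence of local exchange (augmenting) moves that, using the maximality of $\mathcal{P}$, forbid a path from meeting too many vertices of $U$. For instance, if an endpoint $a$ of $P=a\,b\,c$ has a neighbour $u\in U$ that is matched in $G[U]$ to some $u'$, and the other endpoint $c$ has any neighbour $w\in U$, then $u'\,u\,a$ and $b\,c\,w$ are two disjoint $P_3$'s replacing the single path $P$, contradicting maximality; analogous exchanges rule out both endpoints simultaneously meeting matched vertices of $U$. I would split $U$ into the isolated vertices and the matched vertices of $G[U]$ and, from such forbidden configurations, bound how many vertices of each type a fixed path can see. Converting these local bounds into the global inequality $|U|\le k$ is most cleanly done by a charging/discharging scheme (or an assignment argument verifying a Hall-type condition) that sends each missed vertex to a distinct path; alternatively one fixes a maximum packing that also minimises the number of edges between $U$ and $\bigcup\mathcal{P}$, so that no exchange can trade a missed vertex for fewer crossing edges.

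The main obstacle is precisely this case analysis. The difficulty is that a single exchange typically preserves, rather than increases, the size of the packing, so maximality alone does not immediately eliminate a bad interface; one must either chain several exchanges together or invoke a carefully chosen secondary extremal parameter to force a genuine contradiction, and the configurations split into many subcases according to whether the relevant uncovered vertices are isolated or matched in $G[U]$ and how they attach to the endpoints versus the centre of a path. As an alternative route that organises the same combinatorics by induction, I would strengthen the statement to all graphs of maximum degree at most $3$, with a lower bound for $p_3$ corrected by terms counting isolated vertices and small defective components; removing one well-chosen $P_3$ lowers the order by $3$ while keeping the class closed under the induction, and the correction terms absorb the loss of regularity. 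Either way, the crux is the exchange-based structural analysis of the missed set $U$.
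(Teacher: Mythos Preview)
The paper does not prove this theorem: it is quoted from Kelmans and Mubayi \cite{Kelmans2004} and used as a black box for the corollary that follows. There is therefore no proof in the paper to compare your attempt against.

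Regarding the proposal itself, the reduction to the single inequality $|U|\le k$ is correct, and the observation that $G[U]$ is a matching (hence every $u\in U$ sends at least two edges into the covered set) is the right opening move; your crude double count $2|U|\le 5k$ is accurate and, as you say, insufficient. The exchange mechanism you sketch is indeed the engine behind the Kelmans--Mubayi argument, but what you have written is an outline, not a proof: you correctly locate the crux in a local case analysis or a charging scheme showing that each path absorbs at most one uncovered vertex on average, and then you stop. That missing step is essentially the entire content of \cite{Kelmans2004}, and carrying it out is nontrivial. Even your illustrative exchange already hides a subcase (you need $w\neq u'$ for the two replacement paths $u'ua$ and $bcw$ to be vertex-disjoint), which is precisely the sort of wrinkle that makes the full analysis lengthy. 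Your alternative inductive route via a strengthened statement for subcubic graphs with correction terms is likewise a plausible strategy, but until the correction terms are specified and the induction verified it remains a plan rather than a proof.
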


\begin{corollary}
If $G$ is a cubic graph of order $n$, then $f_{o}(L(G))\geq\frac{n}{2}.$
\end{corollary}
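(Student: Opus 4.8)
The plan is to combine the identity $f_o(L(G))=\varepsilon(G)$ with the Kelmans--Mubayi bound of Theorem \ref{Kelmans}. The key observation I would use is that a single path $P_3$ is already an odd-even edge induced subgraph: its centre has degree $2$ (even) while each of its two leaves has degree $1$ (odd), so every one of its two edges joins an even-degree vertex to an odd-degree vertex. Thus a $P_3$ contributes two edges to an odd-even edge induced subgraph ``for free,'' and the strategy is simply to pack as many vertex-disjoint copies of $P_3$ into $G$ as possible.

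First I would take a maximum family $\mathcal{P}$ of vertex-disjoint copies of $P_3$ in $G$, which has size $p_3(G)$. Let $H$ be the edge-induced subgraph of $G$ whose edge set is the union of the edge sets of the members of $\mathcal{P}$. Because the copies are vertex-disjoint, each vertex lies in at most one of them, so its degree in $H$ equals its degree inside the $P_3$ containing it, and no new adjacencies are created between distinct copies; hence $H$ inherits the odd-even property from its components. Since each $P_3$ supplies two edges, $\varepsilon(G)\geq 2\,p_3(G)$. Invoking Theorem \ref{Kelmans}, we have $p_3(G)\geq \tfrac{n}{4}$, whence $f_o(L(G))=\varepsilon(G)\geq 2\,p_3(G)\geq \tfrac{n}{2}$, as claimed.

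I expect no genuine obstacle here; the only point requiring care is verifying that assembling the disjoint $P_3$'s preserves the parity condition, and this is immediate from vertex-disjointness. As a sanity check from the line-graph side, the edge set $S$ of $H$ induces in $L(G)$ a disjoint union of $p_3(G)$ copies of $K_2$ (the two edges of each $P_3$ meet only at its centre and so form a single edge of $L(G)$), in which every vertex has degree $1$; this is precisely an odd induced subgraph of $L(G)$ on $2\,p_3(G)$ vertices, confirming $f_o(L(G))\geq 2\,p_3(G)\geq \tfrac{n}{2}$.
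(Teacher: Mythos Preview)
Your proof is correct and follows essentially the same route as the paper: apply the Kelmans--Mubayi bound to obtain at least $\tfrac{n}{4}$ vertex-disjoint copies of $P_3$, and observe that their edge sets give an induced matching of size $\tfrac{n}{4}$ in $L(G)$ (equivalently, an odd-even edge induced subgraph of $G$ with $2\cdot\tfrac{n}{4}$ edges). The paper's proof is a two-line version of exactly this argument; your additional care in verifying that vertex-disjointness guarantees the union is still odd-even (and that the corresponding subgraph of $L(G)$ is genuinely induced) just makes explicit what the paper leaves implicit.
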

\begin{proof}
Let $G$ be a cubic graph of order $n$. By Theorem \ref{Kelmans}, $G$ has at least $\frac n 4$ vertex disjoint $P_3$, then $L(G)$ contains $\frac n 4$ induced $K_2$. Hence, $f_{o}(L(G))\geq\frac{n}{2}$.
\end{proof}

It is an interesting problem whether or not $f_{o}(L(G))\geq\frac{n}{2}$ for any connected regular graph of order at least 3?

\section{\large Disproof of a related conjecture}
In 1997, Berman, Wang, Wargo proposed a conjecture as follows.
\begin{conjecture}[Berman, Wang, Wargo \cite{Berman}] For any connected simple graph $G$ of order $n$, $f_o(G)\geq 2\lfloor\frac{n}{4}\rfloor$.
\end{conjecture}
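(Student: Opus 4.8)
The plan is to \emph{disprove} the conjecture by exhibiting a connected graph $G$ of order $n$ with $f_o(G)<2\lfloor n/4\rfloor$. The first thing I would record is that the example $G_0$ of Theorem 2.3 (the bipartite complement of the Heawood graph, i.e.\ the incidence graph of the $2$-$(7,4,2)$ biplane) already sits exactly on the boundary: $f_o(G_0)=6=2\lfloor 14/4\rfloor$, so although its ratio $f_o(G_0)/v(G_0)=3/7$ is strictly below $1/2$, the floor makes the conjectured inequality hold with equality. Since $f_o$ and $2\lfloor n/4\rfloor$ are both even, a counterexample must have genuine slack, namely $f_o(G)\le 2\lfloor n/4\rfloor-2$; losing by exactly one, as $G_0$ does, is not enough.

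Before committing to a construction I would check whether one can simply glue copies of $G_0$ back together to restore connectivity, and I expect this to fail for an instructive reason. If $G'$ is obtained from two copies of $G_0$ by a single bridge $ab$, then an odd induced subgraph either avoids the bridge (and is a disjoint union of two odd subgraphs, of total size at most $2f_o(G_0)=12$) or uses it, in which case each copy contributes an induced subgraph all of whose degrees are odd except at the bridge endpoint. Writing $g(G_0)$ for the largest induced subgraph of $G_0$ with exactly one even-degree vertex, one checks $g(G_0)\ge 7$ (take two points together with one of their two common blocks and the four blocks through exactly one of them; the common block has degree two while all other six vertices have odd degree). By vertex-transitivity the even vertex may be placed at $a$ and at $b$, so the bridge case attains $2\cdot 7=14$ and $f_o(G')=14=2\lfloor 28/4\rfloor$. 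The same computation shows that identified vertices and connector vertices also reproduce the equality, because every connector lets two near-odd pieces be spliced at their even-degree endpoints. Hence combining boundary examples cannot create slack.

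Consequently I would look for a single, denser gadget whose maximum odd induced subgraph is strictly sparser than the boundary allows. The natural candidates are incidence graphs of larger $2$-$(v,k,2)$ biplanes, for instance the $2$-$(11,5,2)$ biplane on $22$ vertices: since any two points lie in exactly two common blocks, these graphs are $K_{2,3}$-free and $K_{3,3}$-free, which prevents any odd induced subgraph from containing a large complete bipartite piece and thereby keeps its order under control. I would bound $f_o$ of such a graph by the degree-parity and codegree casework used in Theorem 2.3: split an odd induced subgraph across its two sides $(S\cap X,\,S\cap Y)$, count $e_G(S,V\setminus S)$ and the common neighbourhoods, and eliminate the admissible degree sequences one by one. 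The target is $f_o\le v/2-2$, e.g.\ $f_o\le 8<10=2\lfloor 22/4\rfloor$, which would disprove the conjecture on a connected graph.

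The hard part will be exactly the difficulty that makes Theorem 2.3 delicate, only larger: establishing the upper bound on $f_o$ for the bigger incidence graph is a lengthy case analysis, and — crucially — one must verify that $f_o$ drops \emph{strictly} below the boundary rather than merely landing on it (as happens for $G_0$ and for every naive gluing of copies). If the biplane still saturates the bound, the fallback is a direct search for a connected graph of order $n\equiv 0\pmod 4$, where $2\lfloor n/4\rfloor=n/2$ leaves the most room, with $f_o\le n/2-2$; in either route the heart of the argument is the sharp estimate of the maximum odd induced subgraph.
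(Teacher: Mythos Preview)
Your proposal is not a disproof but a research plan with an unfilled central gap: you never establish that the incidence graph of the $2$-$(11,5,2)$ biplane (or any other candidate) actually has $f_o$ strictly below $2\lfloor n/4\rfloor$. You correctly observe that $G_0$ sits on the boundary and that naive gluings of $G_0$ cannot create slack, but this only rules out easy constructions; it does not produce a counterexample. The sentence ``The target is $f_o\le 8$'' is a hope, not a theorem, and you concede that the required case analysis would be at least as intricate as Theorem~2.3 --- with no guarantee it yields the strict inequality rather than another boundary case. Until that computation is done, nothing has been disproved.

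The paper avoids all of this via a different and much cleaner idea: take $G_n=L(K_n)$ for even $n$. The identity $f_o(L(G))=\varepsilon(G)$ (the maximum size of an odd--even edge-induced subgraph of $G$) reduces the problem to bounding bipartite subgraphs of $K_n$ in which one side has all odd degrees and the other all even degrees. A short parity-and-AM--GM argument gives $\varepsilon(K_n)\le n(n-2)/4$, with equality realised by $K_{n/2,\,n/2-1}$. Since $v(G_n)=\binom{n}{2}$, one checks directly that $\frac{n(n-2)}{4}<2\bigl\lfloor\frac{1}{4}\binom{n}{2}\bigr\rfloor$ for all even $n\ge 8$; in particular $L(K_8)$ has $28$ vertices and $f_o=12<14$. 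The line-graph viewpoint is what you are missing: it converts the hard problem of bounding $f_o$ on a mystery graph into an elementary extremal count on $K_n$, and delivers an infinite family of connected counterexamples without any case analysis.
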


We disprove it by showing the following theorem.
Since $(\frac n 2 -1)\times \frac n 2<2(\frac{n(n-1)}{8}-1)$ for $n>8$, and $(\frac n 2 -1)\times \frac n 2<2\lfloor\frac{n(n-1)}{8}\rfloor$ for $n=8$,
we have $f_o(G_n)< 2\lfloor\frac{n(n-1)/2} 4\rfloor$.

\begin{theorem}
Let $G_n=L(K_n)$ for any positive integer $n$. If $n$ is even, $f_{o}(G_n)=\frac {n(n-2)} 4$.
\end{theorem}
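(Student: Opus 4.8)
The plan is to transfer the whole problem to the base graph $K_n$ via the identity $\varepsilon(G)=f_o(L(G))$ recorded above, so that it suffices to determine the maximum size $\varepsilon(K_n)$ of an odd-even edge induced subgraph of $K_n$. The first step is to record the structure of any such subgraph $H$. Letting $A$ and $B$ be the sets of vertices having odd degree and (positive) even degree in $H$, the defining parity condition forces every edge of $H$ to join a vertex of $A$ to a vertex of $B$; hence $H$ is a bipartite subgraph of $K_n$ with parts $A,B$ in which all $A$-degrees are odd and all $B$-degrees are even. Writing $a=|A|$ and $b=|B|$, and computing $e(H)=\sum_{v\in A}d_H(v)\equiv a \pmod 2$ against $e(H)=\sum_{u\in B}d_H(u)\equiv 0 \pmod 2$, I conclude that $a$ is even. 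This normalization drives both bounds, and the degenerate cases $a=0$ or $b=0$ (which force $H$ to be edgeless) are dealt with immediately.

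For the lower bound I would exhibit an explicit subgraph. Since $n$ is even, exactly one of $\tfrac n2$ and $\tfrac n2-1$ is even and the other is odd; I take $a$ to be the even one and $b$ the odd one, so $a+b=n-1$. Then the complete bipartite graph $K_{a,b}\subseteq K_n$ has every $A$-vertex of odd degree $b$ and every $B$-vertex of even degree $a$, so it is an odd-even edge induced subgraph, yielding $\varepsilon(K_n)\ge ab=\tfrac n2\bigl(\tfrac n2-1\bigr)=\tfrac{n(n-2)}4$.

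For the upper bound I would estimate $e(H)$ for an arbitrary valid $H$. Each $A$-vertex has odd degree at most the largest odd integer not exceeding $b$, so $e(H)\le ab$ when $b$ is odd and $e(H)\le a(b-1)$ when $b$ is even. In either case $e(H)\le ab'$ for an odd integer $b'\le b$ (namely $b'=b$ or $b'=b-1$) with $a+b'\le n-1$: indeed $a$ even and $b'$ odd make $a+b'$ odd, hence it cannot equal the even number $n$, so it is at most $n-1$. Because $a$ and $b'$ have opposite parity we have $a\ne b'$, whence $ab'=\tfrac{(a+b')^2-(a-b')^2}{4}\le\tfrac{(n-1)^2-1}{4}=\tfrac{n(n-2)}4$, matching the construction.

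The main obstacle is precisely this upper bound: the naive inequality $e(H)\le ab$ together with $a+b\le n$ is too weak, since it would allow $a=b=\tfrac n2$ and $ab=\tfrac{n^2}4$ whenever $4\mid n$. The point is to squeeze out the two parity losses simultaneously — first that $a+b'$ is forced to be odd (costing one vertex, so $a+b'\le n-1$), and second that $a\ne b'$ (costing the term $\tfrac{(a-b')^2}4\ge\tfrac14$) — and it is exactly these two deductions that reduce $\tfrac{n^2}4$ to $\tfrac{n(n-2)}4$. I would be careful to note that the achievability of the $b$-even estimate is never needed for the upper bound, as that case is subsumed by the reduction to $b'=b-1$, and only the $b$ odd construction is required to prove equality.
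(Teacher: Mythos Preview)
Your argument is correct and follows essentially the same route as the paper: translate to $\varepsilon(K_n)$, observe that the odd side has even cardinality, bound $e(H)$ by a product $a\cdot(\text{odd number}\le b)$, and exhibit $K_{n/2,\,n/2-1}$ for the lower bound. The only cosmetic difference is in the last squeeze: the paper (somewhat loosely) asserts $|Y|$ is even, bounds $e(H)\le|X|(|Y|-1)\le\bigl(\tfrac{n-1}{2}\bigr)^2$, and then invokes integrality of $e(H)$ to drop to $\tfrac{n(n-2)}{4}$, whereas you split on the parity of $b$ and use $a\ne b'$ to shave off the extra $\tfrac14$ directly---your version is in fact the cleaner of the two.
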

\begin{proof}

Let $H$ be a maximum odd-even edge induced subgraph of $K_n$ with bipartition $(X,Y)$. Without loss of generality, let $d_H(x)$ be odd for any $x\in X$ and $d_H(y)$ be even for any $y\in Y$.
Since $\sum_{x\in X}d_H(x)=\sum_{y\in Y}d_H(y)\equiv 0\ (\hspace{-2mm}\mod 2)$, $|X|$ must be even. Moreover, since $|X|+|Y|=n$ is even, $|Y|$ is even. It follows that $e(H)=\sum_{x\in X}d_H(x)\leq|X|(|Y|-1)\leq (\frac{|X|+|Y|-1} 2)^2\leq\frac {(n-1)^2} 4=\frac {n^2-2n+1} 4$. Since $e(H)$ is an integer, $e(H)\leq \frac {n(n-2)} 4$. On the other hand, $K_n$ has an odd even edge induced subgraph $K_{\frac n 2, \frac n 2 -1}$ with size $\frac {n(n-2)} 4$. Thus, $f_o(G_n)=\frac {n(n-2)} 4$.
\end{proof}

\section{\large Concluding remarks}

It was shown that Conjecture 2.1 is not true for bipartite graph. It is an interesting problem that whether Conjecture 2.1 holds for graphs with chromatic number at least three. 

Recall that a graph $H$ is called {\it odd-even} if the parity of $d_H(x)$ and $d_H(y)$ is distinct for any edge $xy\in E(H)$. It is clear that an odd-even graph is bipartite. The maximum size of an odd-even edge induced subgraph of a graph $G$ is denoted by $\varepsilon(G)$. Indeed, $\varepsilon(G)=f_o(L(G))$. We have shown that for any tree $T$ of order $n$, $\varepsilon(T)$ is at least $f(n)$, where $f(n)$ is close to $\frac {n-1} 2$ (see Theorem 3.1). It is well known that any graph of size $m$ contains a bipartite graph of size at least $\frac m 2$. Theorem 4.2 indicates that $\varepsilon(K_n)$ is also close to half of the size of $K_n$. It is interesting to give a good lower bound for $\varepsilon(G)$ in terms of the size of $G$.

\vspace{3mm}
\noindent{\bf Data Availability} 

\vspace{2mm}No data was used for the research described in the article.

\vspace{4mm}\noindent{\bf Acknowledgments}

\vspace{2mm} We thank the reviewers whose comments have helped to considerably improve the presentation of the paper.

\end{document}